\definecolor{battleshipgrey}{rgb}{0.52, 0.52, 0.51} 
\theoremstyle{plain}
\newtheorem{theorem}{Theorem}[section]
\newtheorem{lemma}[theorem]{Lemma}
\newtheorem{proposition}[theorem]{Proposition}
\newtheorem{corollary}[theorem]{Corollary}
\newtheorem{definition}[theorem]{Definition}
\theoremstyle{remark}
\newtheorem{remark}{Remark}[section]
\newtheorem{example}{Example}[section]
\newtheorem*{acknowledgment}{Acknowledgment}
\numberwithin{equation}{section}
\newcommand{\K}{\mathbb{K}}
\newcommand{\bP}{\mathbb{P}}
\newcommand{\R}{\mathbb{R}}
\newcommand{\Z}{\mathbb{Z}}
\newcommand{\N}{\mathbb{N}}
\newcommand{\PP}{\mathbb{P}}
\newcommand{\cX}{\mathcal{X}}
\newcommand{\cP}{{\mathcal P}}
\newcommand{\fS}{\mathfrak{S}}
\newcommand{\GL}{\mathrm{GL}}
\newcommand{\lcm}{\mathrm{lcm}} 
\newcommand{\Gras}{\mathrm{Gras}}
\newcommand{\op}{\mathrm{op}}
\newcommand{\true}{{\mathrm{true}}}
\newcommand{\false}{{\mathrm{false}}}
\newcommand{\rechts}{{\mathrm{right}}}
\newcommand{\links}{{\mathrm{left}}}
\newcommand{\oder}{{\mathrm{or}}}
\newcommand{\und}{{\mathrm{and}}}
\newcommand{\msk}{\medskip}
\newcommand{\ssk}{\smallskip}
\newcommand{\nin}{\noindent}
\begin{document}

\title[Distributive lattices, associative geometries]{Distributive lattices, associative geometries:\\ the arithmetic case}


\author{Wolfgang Bertram}

\address{Institut \'{E}lie Cartan de Lorraine \\
Universit\'{e} de Lorraine at Nancy, CNRS, INRIA \\
Boulevard des Aiguillettes, B.P. 239 \\
F-54506 Vand\oe{}uvre-l\`{e}s-Nancy, France\\
url: \url{http://iecl.univ-lorraine.fr/~Wolfgang.Bertram/}}

\email{\url{wolfgang.bertram@univ-lorraine.fr}}

\subjclass[2010]{
06D05 , 
06D50 , 
11A05  
}

\keywords{associative geometry, distributive lattice, gcd, lcm, semigroup
}

\begin{abstract} 
 We prove an identity for five arguments, valid
  in the lattice of natural numbers with gcd and lcm as lattice operations.
 More generally, this identity characterizes 
 arbitrary distributive lattices. 
 Fixing three of the five arguments,
 we always get {\em associative} products, and thus 
every distributive lattice carries many semigroup structures.  
 In the arithmetic case, we explicitly compute multiplication tables of
 such semigroups and describe some of their properties. 
 Many of them are periodic, and can be seen as 
  ``non-commutative analogs'' of the rings $\Z/ n\Z$.
\end{abstract}

\maketitle

\section{Introduction}

For a quintuplet 
$(x,a,y,b,z)$ of elements of  a lattice $\cX$ with operations
$\land$ (meet) and $\lor$ (join), we define two other elements by
\begin{equation}\label{eqn:1}
\begin{matrix} 
L = L(x,a,y,b,z) & := & \bigl(b \land (z \lor (a \land y))\bigr) & \lor &
\bigl(z \land (b \lor (x \land y))\bigr) & \lor 
\\
&  &
 \bigl(x \land (a \lor (z \land y))\bigr) & \lor & \bigl(a \land (x \lor (b \land y))\bigr), &
\end{matrix}
\end{equation}
\begin{equation}\label{eqn:2}
\begin{matrix} 
U = U(x,a,y,b,z) & := & \bigl( a \lor (z \land (b \lor y)) \bigl)  & \land & 
  \bigl( x \lor (b \land (z \lor y)) \bigr) 
& \land 
\\
& & 
\bigl( z \lor (a \land (x \lor y)) \bigl) & \land & 
\bigl( b \lor (x \land (a \lor y)) \bigl) .
\end{matrix}
\end{equation}
The terms defining $L$ and $U$ will also be denoted  by
$L = L_1 \lor L_2 \lor L_3 \lor L_4$ and
$U = U_1 \land U_2 \land U_3 \land U_4$
(related to two other terms $L_5,U_5$, cf.\ Eqn.\ (\ref{eqn:L5U5})).
We study the maps $L, U : \cX^5 \to \cX$ thus defined for the following kinds
of lattices:
\begin{enumerate}
\item
the {\em arithmetic case}:
$\cX = \N_0$ is the lattice of natural numbers (with $0$), with
$\land = \lcm$ ({\em least common multiple}) and
$\lor = \gcd$ ({\em greatest common divisor}),
\item
the {\em totally orderd}, or {\em chain case}: 
here $M$ is a totally ordered set, with $\lor = \max$ and $\land = \min$,
\item
the {\em power set case}: $\cX = \cP(M)$ is the power set of a set $M$,
with $\land = \cap$ being intersection and $\lor = \cup$ union,
\item
the {\em Grassmannian case}: $W$ is a (right) module over a unital ring
$\K$, and $\cX$ the space of all submodules of $W$, with meet
$\land = \cap$ and join $\lor = +$.
\end{enumerate}

\begin{theorem}\label{th:Main}
In cases {\rm (1) -- (3)}, we have $U = L$, i.e., 
\begin{equation}
\label{eqn:main}
\forall x,a,y,b,z \in \cX : \qquad 
L(x,a,y,b,z) = U(x,a,y,b,z).
\end{equation}
\end{theorem}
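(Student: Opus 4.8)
The plan is to observe that all three lattices in cases (1)--(3) are \emph{distributive}, and then to prove the identity $L = U$ for every distributive lattice by reducing it to a finite verification in the two-element lattice. First I would dispatch distributivity in each case. Cases (2) and (3) are classical: every totally ordered set is a distributive lattice, and $(\cP(M), \cap, \cup)$ is distributive because $\cap$ distributes over $\cup$. For the arithmetic case (1), I would use unique factorization: the $p$-adic valuations give an embedding $n \mapsto (v_p(n))_p$ of $\N_0$ (with $v_p(0) = \infty$) into the product $\prod_p (\N_0 \cup \{\infty\})$ of chains, under which $\gcd$ becomes coordinatewise $\min$ and $\lcm$ coordinatewise $\max$. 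Since a product of chains is distributive and $\N_0$ is a sublattice of it, $(\N_0, \gcd, \lcm)$ is distributive. Note that the paper's convention makes $\land = \lcm$ the meet and $\lor = \gcd$ the join, but this is harmless, distributivity being a self-dual condition.

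The core step is then to prove the lattice identity $L = U$ in an arbitrary distributive lattice. Here I would invoke the fundamental fact that the variety of distributive lattices is generated by the two-element lattice $\mathbf{2} = \{0,1\}$; equivalently, a lattice equation holds in every distributive lattice if and only if it holds in $\mathbf{2}$. Thus it suffices to verify \eqref{eqn:main} when each of $x,a,y,b,z$ ranges over $\{0,1\}$, with $\land = \min$ and $\lor = \max$, that is, to check the equality of two Boolean functions $L = L_1 \lor L_2 \lor L_3 \lor L_4$ and $U = U_1 \land U_2 \land U_3 \land U_4$ of five variables.

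The main obstacle is to carry out this verification cleanly rather than by brute force over all $2^5 = 32$ assignments. I expect the term structure to carry a cyclic symmetry among the five arguments (consistent with the auxiliary terms $L_5, U_5$ of \eqref{eqn:L5U5}), together with an order-reversing duality $\land \leftrightarrow \lor$ that interchanges the families $\{L_i\}$ and $\{U_i\}$; exploiting these should collapse the cases to a handful of representative patterns depending only on how many arguments equal $1$ and on their cyclic arrangement.

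An alternative route, avoiding the case analysis altogether, is to prove the two inequalities $L \le U$ and $U \le L$ directly by repeated application of the distributive law. Guided by the notation (lower versus upper) and by the three-variable model of the median, where $(a \land b) \lor (b \land c) \lor (c \land a) \le (a \lor b) \land (b \lor c) \land (c \lor a)$ holds in \emph{every} lattice, I expect the inequality $L \le U$ to hold already without distributivity, a join of meets lying below the corresponding meet of joins, with distributivity needed only for the reverse inequality $U \le L$. Either way, once the conceptual reduction to $\mathbf{2}$ (or equivalently to chains) is in place, the theorem follows from routine, if somewhat lengthy, manipulation.
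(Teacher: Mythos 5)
Your proposal is correct in its main line, but it takes a genuinely different route from the paper's proof of the key step. You and the paper agree on the first reduction: all three lattices are distributive (the paper records this in Corollary \ref{cor:cases}, and your $p$-adic embedding of $(\N_0,\lcm,\gcd)$ into a product of chains is exactly the right justification for case (1)). For the core identity $L=U$ in a distributive lattice, however, the paper (Theorem \ref{th:distrib}) argues by direct equational manipulation: distributivity collapses each $L_i$, $U_i$, yielding the closed forms $L = L_5^{(3)} \lor (L_5^{(1)} \land y)$ and $U = U_5^{(2)} \land (U_5^{(1)} \lor y)$, and then the relations $U_5^{(i)} \land U_5^{(j)} = L_5^{(k)}$, $L_5^{(i)} \lor L_5^{(j)} = U_5^{(k)}$ of Proposition \ref{prop:4-lattice} finish the computation in a few lines. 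Your route instead invokes the fact that the two-element lattice $\mathbf{2}$ generates the variety of distributive lattices and reduces everything to a $2^5$-case truth-table check; this is sound, and in fact the paper carries out precisely that table in Section \ref{sec:associativity} (Remark \ref{rk:strange} notes that it yields an independent proof of $L=U$ in the power-set, hence in the general distributive, case). What the paper's algebraic route buys is the explicit normal forms \eqref{eqn:A}, \eqref{eqn:B} and the cubic sublattice structure, which are reused throughout; what your route buys is brevity and mechanical verifiability. One caution on your case reduction: the symmetry available is the Klein four-group acting on $(x,a,b,z)$ with $y$ distinguished (Lemma \ref{la:Klein}), not a cyclic symmetry of all five arguments.

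One aside in your proposal is actually false and worth flagging: you conjecture that $L \le U$ holds in \emph{every} lattice, with distributivity needed only for $U \le L$. The paper's own proof of (i)~$\Rightarrow$~(ii) in Theorem \ref{th:distrib} exhibits the pentagon $N_5$ with $L_1 = b > a = U_1$, hence $U < L$ there; so $L \le U$ already fails in non-modular lattices. Since you present this only as an unexplored alternative and your committed argument is the reduction to $\mathbf{2}$, this does not invalidate your proof, but the ``join of meets below meet of joins'' heuristic does not apply here because the $L_i$ are not meets of subsets of a common family of joins.
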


\nin
In the Grassmannian case {\rm (4)}, we have $L \leq U$, i.e.,
\begin{equation}
\label{eqn:main'}
\forall x,a,y,b,z \in \cX : \qquad 
L(x,a,y,b,z) \subset  U(x,a,y,b,z).
\end{equation}
The starting point of the present work was the discovery, triggered by
computer checks (cf.\ Remark \ref{rk:Sage}), that (to our big surprise), in 
the arithmetic case Inequality (\ref{eqn:main'}) becomes
an equality. 
From the point of view of abstract lattice theory, this fact is explained as follows
(Theorem \ref{th:distrib}):

\begin{theorem}\label{th:distrib0}
Let $\cX$ be a  lattice. Then the following are equivalent:
\begin{enumerate}
\item[\rm (i)]
The identity $U = L$ holds in $\cX$.
\item[\rm (ii)]
The lattice $\cX$ is \href{https://en.wikipedia.org/wiki/Distributive_lattice}{\em distributive} : $\forall x,y,z \in \cX$,
$x \land (y \lor z) = (x \land y) \lor (x \land z)$.
\end{enumerate}
\end{theorem}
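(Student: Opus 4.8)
The plan is to prove the two implications separately, handling the ``easy'' direction (ii) $\Rightarrow$ (i) by transport of structure and the substantive direction (i) $\Rightarrow$ (ii) by contraposition.

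For (ii) $\Rightarrow$ (i) I would avoid expanding the eight terms by hand and instead reduce to the power-set case already settled in Theorem~\ref{th:Main}. By Birkhoff's representation theorem, every distributive lattice admits a lattice embedding $\iota$ into some power set $\cP(M)$, i.e.\ an injection preserving $\land$ and $\lor$. Since $L$ and $U$ are lattice polynomials in the five arguments, built only from $\land$ and $\lor$, any such $\iota$ satisfies $\iota\bigl(L(x,a,y,b,z)\bigr) = L(\iota x,\iota a,\iota y,\iota b,\iota z)$ and likewise for $U$. Case (3) of Theorem~\ref{th:Main} gives $L = U$ in $\cP(M)$, so the two images coincide, and injectivity of $\iota$ forces $L(x,a,y,b,z) = U(x,a,y,b,z)$ in $\cX$. (Equivalently, one may expand $L$ into join-normal form and $U$ into meet-normal form using only the distributive law and compare; the representation argument merely organizes this bookkeeping.)

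For (i) $\Rightarrow$ (ii) I would argue by contraposition via the Birkhoff--Dedekind criterion: a non-distributive lattice contains a sublattice isomorphic to the diamond $M_3$ or to the pentagon $N_5$. As $L$ and $U$ involve only $\land$ and $\lor$, their value on a quintuplet depends solely on the sublattice generated by its entries, so it suffices to falsify $U = L$ inside each of $M_3$ and $N_5$. Writing $M_3 = \{0,p,q,r,1\}$ with $p,q,r$ the three pairwise complementary middle elements, the quintuplet $(x,a,y,b,z) = (p,q,r,q,p)$ gives $L = 0$ but $U = 1$. Writing $N_5 = \{0,u,v,w,1\}$ with $0 < u < v < 1$ and $w$ incomparable to $u$ and $v$, the quintuplet $(u,v,w,v,u)$ gives $L = u$ but $U = v$. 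In both cases $L \neq U$, contradicting (i), so $\cX$ must be distributive.

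The substantive step is the second implication, and the real difficulty is not the verification but locating the witnessing quintuplets: the verification is just four meet-terms and four join-terms evaluated in a five-element lattice, whereas the symmetric reductions are deceptive. Choosing $y \in \{0,1\}$, or forcing $\{a,b\} = \{0,1\}$, collapses $L$ and $U$ to a common value and detects nothing, so one is driven to take $y$ a proper middle element. The uniform pattern $(x,a,y,b,z) = (\xi,\eta,\zeta,\eta,\xi)$, with $\{\xi,\eta,\zeta\}$ the middle layer of the obstruction, works in both $M_3$ and $N_5$, and I would isolate exactly this as a lemma. A minor technical caveat in the first implication is that for infinite $\cX$ the embedding into a power set needs the prime-ideal form of Birkhoff's theorem rather than the finite join-irreducible version.
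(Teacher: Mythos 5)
Your proposal is correct. The substantive direction (i) $\Rightarrow$ (ii) is essentially the paper's argument: both reduce to the $M_3$/$N_5$ sublattice criterion (valid because $L$ and $U$ are lattice terms, hence computed inside the generated sublattice), and your $M_3$ witness $(p,q,r,q,p)$ is literally the paper's choice $x=z=u$, $a=b=w$, $y=v$, giving $L=0$, $U=1$. Your $N_5$ witness differs: the paper takes $a=y=u$, $b=w$, $z=v$ with $x$ arbitrary and only needs the one-sided estimate $U\le U_1=a<b=L_1\le L$, whereas you take $(u,v,w,v,u)$ with $y=w$ the incomparable element; I checked that all four $L_i$ equal $u$ and all four $U_i$ equal $v$, so your witness works, and the uniform pattern $(\xi,\eta,\zeta,\eta,\xi)$ across both obstructions is a pleasant bonus the paper does not have.

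For (ii) $\Rightarrow$ (i) you take a genuinely different route. The paper computes directly: distributivity puts $L$ into the join-normal form $L_5^{(3)}\lor(L_5^{(1)}\land y)$ and $U$ into the meet-normal form $U_5^{(2)}\land(U_5^{(1)}\lor y)$, and equality then follows from the relations $U_5^{(i)}\land U_5^{(j)}=L_5^{(k)}$ and $L_5^{(i)}\lor L_5^{(j)}=U_5^{(k)}$ of Proposition \ref{prop:4-lattice}. You instead embed $\cX$ into a power set and invoke the power-set case. One caveat: within the paper's architecture, case (3) of Theorem \ref{th:Main} is deduced \emph{from} the present theorem (via Corollary \ref{cor:cases}), so citing it as ``already settled'' would be circular; you need an independent verification of $L=U$ in $\cP(M)$, which is an elementary pointwise check over the $2^5$ membership patterns (the paper records exactly this in the tables of Section \ref{sec:associativity}, cf.\ Remark \ref{rk:strange}). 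With that check supplied, your argument is clean and is in fact the strategy the paper itself uses later to prove associativity; your remark that infinite $\cX$ requires the prime-ideal form of the representation theorem is well taken. What the paper's direct computation buys in exchange is the explicit normal forms and the cube-lattice structure, which are reused throughout (the hexad formulas, Corollary \ref{cor:cases}, the arithmetic tables), whereas your route proves the identity with less structural by-product.
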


Moreover, we see that the lattice is {\em modular} when  Inequality (\ref{eqn:main'}) holds, and 
one may conjecture that the converse also holds   (see Remark \ref{rk:conjecture}) -- we shall come back to this problem in subequent work.

\ssk 
The most important issue about the quintary map $L=U$ thus defined is that {\em fixing three of the five arguments, it defines associative products
on $\cX$}. More precisely,  the ``central'' variable
$y$ shall be among the three fixed argments.  For instance, fixing $(a,y,b)$, we define the {\em principal product} $\bullet:\cX^2 \to \cX$ by
\begin{equation}
x \bullet z := x \bullet_{a,y,b} z := L(x,a,y,b,z) = U(x,a,y,b,z).
\end{equation}
Because of the obvious invariance of $L$ under the Klein four-group $V$
acting on the variables $(x,a,b,z) = (v_1,v_2,v_3,v_4)$ (Lemma \ref{la:Klein}), we get
$\frac{24}{4}=6$ different kinds of ``products'' on $\cX$. 
Following
a terminology used by Conway and Smith (\cite{CS}), we present these
 six products as a ``hexad'' of products, labelled by the action of the symmetric group $\fS_3 = \fS_4 / V$
 (e.g., $((23)\bullet )(x,z) = z \bullet x$ is the opposite product of $\bullet$; in general, opposite vertices correspond to
 opposite products), 
\begin{equation}\label{eqn:hexad2}
\xymatrix{ &  {\red \bullet }  \ar@{-}[rd] \ar@{-}[ld] & \\
{\blue (12) \bullet }  \ar@{-}[d] & &  {\blue (13) \bullet}  \ar@{-}[d] \\
{\red  (132)\bullet }  \ar@{-}[rd] & &  {\red (123) \bullet} \ar@{-}[ld] \\
& \blue{ (23) \bullet } & 
}
\end{equation}

\begin{theorem}
Let $\cX$ be a distributive lattice. Then the six products given by the above ``hexad'' are all  
{\em associative}. In other terms, they define semigroup structures on $\cX$.
These semigroups are {\em weak bands}, in the sense that they
satisfy the identity
$$
\forall v,w \in \cX : \qquad
v^2 w = v w = v w^2 .
$$
\end{theorem}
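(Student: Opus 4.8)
The plan is to replace the unwieldy four-fold join defining $L$ by a compact normal form and then to verify associativity for a single master operation that specializes to every product in the hexad. Since $\cX$ is distributive, Theorem~\ref{th:distrib0} gives $L=U$, so $x\bullet z:=L(x,a,y,b,z)$ is unambiguous. Expanding each summand $L_i$ by distributivity --- for instance $L_1=(b\land z)\lor(a\land b\land y)$ and $L_4=(a\land x)\lor(a\land b\land y)$ --- and discarding absorbed terms, I expect $L$ to collapse to the four-term normal form
\[
x\bullet z \;=\; (a\land x)\lor(b\land z)\lor(x\land y\land z)\lor(a\land b\land y).
\]
This is the crucial first step: it trades a join of four meets-of-joins for a join of four pure meets, which is what makes all the later bookkeeping feasible.

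The key observation is that this normal form, and --- via the $V$-invariance of Lemma~\ref{la:Klein} together with distributive simplification --- each of the other five products in the hexad \eqref{eqn:hexad2}, is an instance of the single master operation
\[
u * v \;:=\; (u\land v\land p)\lor(u\land q)\lor(v\land r)\lor s ,
\]
for suitable fixed $p,q,r,s\in\cX$ (in the principal case $p=y$, $q=a$, $r=b$, $s=a\land b\land y$). The remaining products produce the same shape after relabelling, in the two degenerate guises where the coupling term reads $u\land v$ or is absent altogether and where the summand $s$ may itself be absent; these are covered by the very same computation with the factor $p$ suppressed or the coupling term deleted. I would therefore prove once and for all that $*$ is associative for arbitrary constants $p,q,r,s$. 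Writing $P:=u*v$ and expanding $(u*v)*w=(P\land w\land p)\lor(P\land q)\lor(w\land r)\lor s$ by distributivity, then absorbing every term lying below one of $u\land q$, $w\land r$, or $s$, I expect both $(u*v)*w$ and $u*(v*w)$ to reduce to the manifestly symmetric expression
\[
(u\land v\land w\land p)\lor(u\land q)\lor(v\land q\land r)\lor(w\land r)\lor s ,
\]
which establishes associativity simultaneously for all six products.

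The weak-band identity is then a one-line corollary of this triple-product formula. Setting $v=u$ and using $u\land q\land r\le u\land q$ gives $u*u*w=(u\land w\land p)\lor(u\land q)\lor(w\land r)\lor s=u*w$; setting $v=w$ and using $w\land q\land r\le w\land r$ gives $u*w*w=u*w$ in the same way. Since every hexad product is an instance of $*$, the law $v^2w=vw=vw^2$ holds for all of them.

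I expect the only real difficulty to be organizational rather than conceptual: the honest expansion in the associativity step generates on the order of a dozen meets on each side, and equality becomes visible only after a disciplined pass of absorption. Two points deserve care. First, one must confirm that each of the six products genuinely reduces to the master form $*$ --- the three coupling types (via $y$, via a bare $u\land v$, and with no coupling term) all occur --- so that a single lemma really does cover the whole hexad. Second, every manipulation uses distributivity in an essential way, so the argument is exactly as strong as the hypothesis, consistent with Theorem~\ref{th:distrib0}.
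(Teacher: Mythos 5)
Your proposal is correct, and I checked that the computations you sketch do go through: the normal form you posit for $L$ is exactly the paper's Equation~(\ref{eqn:A}), the six hexad products do fall into the three coupling types you describe (coupling $u\land v\land y$ for $L_{(a,y,b)}$ and its opposite, bare coupling $u\land v$ for $L_{(x,a,y)}$ and its opposite, and no coupling for $L_{(x,y,b)}$ and its opposite, after merging the two $u$-terms and the two $v$-terms by distributivity), and both $(u*v)*w$ and $u*(v*w)$ absorb down to $(u\land v\land w\land p)\lor(u\land q)\lor(v\land q\land r)\lor(w\land r)\lor s$ in each guise, from which the weak-band law follows as you say. However, your route is genuinely different from the paper's. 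The paper deliberately avoids the direct expansion: it invokes Stone's representation theorem to embed $\cX$ into a power set $\cP(M)$, partitions $M$ into six subsets determined by $(a,y,b)$, and observes that on each piece the product restricts to one of six elementary connectors (true, false, left, right, and, or), each trivially an associative weak band; associativity then transfers back to $\cX$ because it is an equational identity. What the paper's approach buys is structural information reused later --- the decomposition into connectors directly yields the bottom/top sets of Theorem~\ref{th:bottom-top}, the periodicity of Theorem~\ref{th:period}, and the ternary para-associativity --- at the cost of assuming the lattice bounded (as in the hypothesis of Theorem~\ref{th:associativity}) so that the representation theorem applies. What your approach buys is a self-contained, purely equational proof valid for an arbitrary (not necessarily bounded) distributive lattice, matching the hypothesis as stated in the introduction, together with an explicit closed formula for the triple product; its cost is that it proves associativity and the weak-band law but none of the finer structure. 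The one point you should make fully explicit in a written version is the verification, for each of the three coupling types separately, that the absorption step still eliminates every cross term (it does, since each discarded meet lies below one of $u\land q$, $w\land r$, or $s$), so that a single lemma really covers the whole hexad.
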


Instead of checking associativity by direct (and necessarily long) computation, we proceed 
by using \href{https://en.wikipedia.org/wiki/Distributive_lattice#Representation_theory}{\em representation theory for distributive lattices}: 
they can be imbedded into power set lattices (case (3) mentioned above);  and in the power set case,  we can  decompose the product
into ``atoms'', where the atoms are six elementary products called ``true, false, left, right, and, or'', and which obviously are associative and weak bands.



In the arithmetic case, we study these products further: many of them
are {\em periodic}, and then essentially reduce to {\em finite semigroups}
(Theorem \ref{th:arithmetic_case}).
In Section \ref{sec:arithmetic}, we give  several examples
of ``multiplication tables'' of such finite semigroups.
To give an idea,  the product $x\bullet_{3,2,4} z$
has ``column period'' $3$, and ``line period'' $4$:
\begin{center}
\begin{tabular}{l | l      l   l      l        l    l     l   l    l     l }  
$x\bullet_{3,2,4} z$ & 0 & 1 & 2 & 3 & 4 & 5 & 6  \\
\hline
0 & 12 & 4 & 4& 12 & 4 & 4 & 12 \\
1 & 3 & 1 & 1 & 3 & 1 & 1  & 3 \\
2 & 6 & 2 & 2 & 6 & 2&   2 &6 \\
3 & 3 & 1 & 1 & 3 & 1&  1 & 3 \\
4 &  12 & 4 & 4 & 12 & 4  & 4 & 12
\end{tabular}
\end{center}
To illustrate associativity:
$(5 \bullet 6) \bullet 6 = 3 \bullet 6 = 3 = 5 \bullet 6 = 
5 \bullet (6 \bullet 6)$.
Note that the set
$\{1,\ldots, 12\}$, as well as the set of divisors of $12$, form semigroups
for $\bullet$.
Principal products are periodic, and thus
can be seen
as analogs of the usual quotient rings $\Z / n \Z$. 
The parastrophes are not always periodic. 
One may wonder if   these semigroups have non-trivial applications
in number theory. 

\ssk
Motivation for the present work comes from joint work with Michael
Kinyon on  {\em associative geometries}, \cite{BeKi1, BeKi2, BeKi12}.
An associative geometry has as underlying space  a {\em Grassmannian} (case
(4) in the above list), 
whence an underlying lattice structure playing an important role in the theory.
The algebraic structure of an associative geometry is encoded by a quintary {\em structure map}
$\Gamma : \cX^5 \to \cX$, $(x,a,y,b,z) \mapsto \Gamma(x,a,y,b,z)$.
This structure map has  interesting algebraic properties showing a 
``geometric flavor''.  Thus it is a natural question to ask 
how it looks like in the 
{\em arithmetic case} $W = \K=\Z$, and to find a good 
algorithm for computing it. The answer is that, in this case,
 simply $L = \Gamma = U$, 
which furnishes an excellent algorithm. 
In the case of general Grassmannians, the situation is much more
complicated -- 
 we intend to study the relation between
$L,U$ and $\Gamma$
in the case of
Grassmannians, and of modular lattices, in subsequent work.


\begin{acknowledgment}
I thank Michael Kinyon for helpful comments, and for having first checked
the equivalence (i) $\Leftrightarrow$ (ii) from Theorem \ref{th:distrib0}
by using the automated theorem prover
\href{https://www.cs.unm.edu/~mccune/mace4/}{Prover9}.
\end{acknowledgment}

\section{Prelimary remarks on the general case} 

In every lattice $\cX$,
the expressions $L,U,L_1,\ldots,U_4$ defined by (\ref{eqn:1}), (\ref{eqn:2}), are
closely related to the following expressions (see the following proof for 
explanations concerning the labelling)
\begin{align}\label{eqn:L5U5}
L_5:= L_5^{(3)} := (b \land z) \lor (a \land x) , & \qquad
U_5 := U_5^{(2)} := (a \lor z) \land (b\lor x) , 
\\
L_5^{(1)} :=  (b \land a) \lor (z \land x) ,&
\qquad
U_5^{(1)}   := (a \lor b) \land (z\lor x) ,
\label{eqn:L5'}
\\
L_5^{(2)} := (b \land x) \lor (a \land z) , &
\qquad
U_5^{(3)}  := (a \lor x) \land (b\lor z) .
\label{eqn:L5''} 
\end{align}

\begin{lemma}\label{la:Klein}
In any lattice,  the
expressions
$L(x,a,y,b,z)$ and $U(x,a,y,b,z)$ defined by
 (\ref{eqn:1}) and (\ref{eqn:2})
are invariant under the action of the Klein
$4$-group (double transpositions), acting on the variables $(x,a,b,z)$, and so are the terms
$L_5,\dots,U_5^{(3)}$ defined above.
\end{lemma}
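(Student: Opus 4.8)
The plan is to reduce everything to a check on two generators of the Klein $4$-group $V$ and then to observe that each generator merely permutes the constituent terms of $L$ and $U$. First I would fix the identification $(x,a,b,z)=(v_1,v_2,v_3,v_4)$ and emphasize that the central variable $y$ is held fixed throughout, so that $V$ acts only on the outer four variables. Since the non-identity elements of $V$ are the three double transpositions and any two of them generate $V$, it suffices to verify invariance under the two substitutions $\sigma\colon x\leftrightarrow z,\ a\leftrightarrow b$ (that is $(14)(23)$) and $\tau\colon x\leftrightarrow a,\ b\leftrightarrow z$ (that is $(12)(34)$).

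For $L=L_1\lor L_2\lor L_3\lor L_4$, the key step is to substitute $\sigma$ and $\tau$ into the four terms of (\ref{eqn:1}) and read off that each generator maps this set of four terms bijectively onto itself. I expect $\sigma$ to induce the transpositions $L_1\leftrightarrow L_4$ and $L_2\leftrightarrow L_3$, and $\tau$ to induce $L_1\leftrightarrow L_2$ and $L_3\leftrightarrow L_4$. Because $\lor$ is commutative and associative, permuting the summands leaves $L$ unchanged, so $L$ is $V$-invariant. The identical computation applies verbatim to $U=U_1\land U_2\land U_3\land U_4$ from (\ref{eqn:2}): one checks that $\sigma$ and $\tau$ induce \emph{the same} index-permutations on the four factors $U_1,\dots,U_4$ as they do on $L_1,\dots,L_4$, and since $\land$ is commutative and associative, $U$ is invariant as well.

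For the auxiliary terms I would use the classical fact that $V$ is precisely the kernel of the action of $\fS_4$ on the three partitions of $\{1,2,3,4\}$ into two unordered pairs; equivalently, every element of $V$ fixes each such partition, either preserving both pairs setwise or interchanging them. Each of $L_5^{(i)}$ and $U_5^{(i)}$ in (\ref{eqn:L5U5})--(\ref{eqn:L5''}) has the form $(v_p\star v_q)\diamond(v_r\star v_s)$ with $\{\star,\diamond\}=\{\land,\lor\}$, and, by commutativity of $\land$ and $\lor$, depends only on the unordered partition $\{\{p,q\},\{r,s\}\}$; the superscript $i\in\{1,2,3\}$ records exactly which of the three partitions occurs. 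Since $V$ fixes each partition, it fixes each of these six terms, which completes the argument.

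There is no genuine obstacle here: the content is entirely a finite verification. The only point requiring care is the bookkeeping that tracks which term is sent to which under $\sigma$ and $\tau$ while keeping $y$ fixed, and organizing the proof through two generators together with the partition description of $L_5^{(i)},U_5^{(i)}$ keeps this bookkeeping to a minimum.
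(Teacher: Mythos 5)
Your proposal is correct and follows essentially the same route as the paper: the paper's proof likewise observes that each double transposition permutes the four terms $L_1,\dots,L_4$ (e.g.\ exchanging $(a,b)$ and $(x,z)$ simultaneously swaps $(L_1,L_4)$ and $(L_2,L_3)$), and similarly for $U_1,\dots,U_4$ and the terms $L_5^{(i)},U_5^{(i)}$, with commutativity of $\land$ and $\lor$ making the order irrelevant. Your reduction to two generators and the partition-kernel description of $V$ are just a tidier write-up of the same finite verification.
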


\begin{proof}
This follows immediately from the definitions. For instance, exchanging simultaneously
$(a,b)$ and $(x,z)$ exchanges $(L_2,L_3)$ and $(L_1,L_4)$, and so on; 
due to commutativity
of $\land$  and  $\lor$, the order is irrelevant.
More formally, to fix the action of the symmetric group $\fS_5$ on the five variables, we fix the
correspondence
$$
{\bf x}:= (x_1,x_2,x_5,x_3,x_4) := 
(x,a,y,b,z) .
$$
We let act
 $\fS_4$ on the variables $(x,a,b,z)=(x_1,x_2,x_3,x_4)$, and 
$\fS_3$ on $(x,a,b)=(x_1,x_2,x_3)$, in the usual way, and hence these
groups also act  on functions of these variables, like
$L_1,\ldots,U_5$.
Thus, for instance, 
$L_5^{(3)}$ is obtained by applying the transposition $(13)$ to $L_5^{(1)}$, and so on.
Note that 
the upper index $3$ in $L_5^{(3)}$ 
 indicates the variable $x_3 = b$ with which $x_4 =z$ is paired via
 $b \land z$ in Formula (\ref{eqn:L5U5}), etc.
\end{proof}

As we will see, in general lattices, $L(x,a,y,b,z)$ is in general different
from $U(x,a,y,b,z)$. However, some ``diagonal values'' of $L$ and $U$ always
agree:

\begin{theorem}\label{th:diagonals}
Let $(\cX,\land,\lor)$ be a lattice and $(x,a,y,b,z) \in \cX^5$. Then:
\begin{enumerate}
\item
for $a=z$ and $b=x$, we get
$L(x,z,y,x,z)= z \land x = U(x,z,y,x,z)$,
\item
for $b=z$ and $x=a$, we get
$L(x,x,y,z,z)= x \lor z = U(x,x,y,z,z)$,
\item
for $a=y=b$, we get
$L(x,y,y,y,z)=y=U(x,y,y,y,z)$,
\item
if $\cX$ is bounded, with maximal element $1$ and minimal element $0$, then

$L(x,1,0,1,z)= x \lor z = U(x,1,0,1,z)$,

$L(x,0,1,0,z)=x \land z = U(x,0,1,0,z)$,

$L(x,1,1,0,z)=  x = U(x,1,1,0,z)$,

$L(x,0,0,1,z) = z = U(x,0,0,1,z)$,

$L(x,a,0,b,z) = L_5(x,a,y,b,z) = (b \land z) \lor (a \land x) $,

$U(x,a,1,b,z) = U_5 (x,a,y,b,z) = (a \lor z) \land (b\lor x)$,

$L(x,0,y,b,z) = L_2(x,a,y,b,z) = z \land (b \lor (x \land y))$,

$U(x,1,y,b,z) = U_2(x,a,y,b,z) = x \lor (b \land (z \land y))$.
\end{enumerate}
\end{theorem}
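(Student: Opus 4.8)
The plan is to prove each of the listed equalities by \emph{direct substitution} into the defining formulas (\ref{eqn:1}) and (\ref{eqn:2}), followed by simplification that uses \emph{only} the laws valid in every lattice: commutativity, associativity and idempotency of $\land$ and $\lor$, the absorption laws $u\land(u\lor v)=u$ and $u\lor(u\land v)=u$, and monotonicity of the two operations; for the statements in (4) one adds the bound laws $u\land 1=u$, $u\lor 1=1$, $u\land 0=0$, $u\lor 0=u$. It is crucial never to invoke distributivity: these are precisely the diagonal substitutions at which $L$ and $U$ agree unconditionally, whereas by Theorem \ref{th:distrib0} any genuinely distributive step would only prove the identity for distributive lattices, not for all lattices as claimed.

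The labour is roughly halved by an order-duality. Interchanging $\land\leftrightarrow\lor$ (and, for (4), $0\leftrightarrow 1$) and simultaneously transposing the two arguments $a\leftrightarrow b$ carries $L$ to $U$ and back; one checks termwise that this operation pairs $L_1\leftrightarrow U_1$, $L_2\leftrightarrow U_3$, $L_3\leftrightarrow U_2$, $L_4\leftrightarrow U_4$. Hence the order-dual of an $L$-identity is a $U$-identity and conversely, so it suffices to establish the $L$-statements and then read off the $U$-statements by duality --- occasionally combined with the Klein symmetry of Lemma \ref{la:Klein} to match up the arguments. For example the equation $U(x,z,y,x,z)=x\land z$ of (1) is exactly the order-dual of the equation $L(x,x,y,z,z)=x\lor z$ of (2), and the two equations of (3) are order-dual to each other.

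For the genuinely degenerate substitutions --- cases (1), (2), (3) and the first four equations of (4) --- each term $L_i$ collapses in one absorption or bound step, either to the asserted value or to something already $\le$ it, so the join equals the asserted value immediately; e.g.\ putting $a=z$, $b=x$ in (1), absorption turns every $L_i$ into $x\land z$. A representative computation of the parametric kind is $L(x,a,0,b,z)$: with $y=0$ one gets $L_1=b\land z$, $L_2=z\land b$, $L_3=x\land a$, $L_4=a\land x$, so $L=(b\land z)\lor(a\land x)=L_5$, and dually $U(x,a,1,b,z)=U_5$.

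The only point demanding real care --- and the nearest thing to an obstacle --- is the pair $L(x,0,y,b,z)=L_2$ and its dual $U(x,1,y,b,z)=U_2$, where the central variable $y$ stays free. Here the substitution $a=0$ does not make $L$ collapse to a single summand: one finds $L=(b\land z)\lor L_2\lor(x\land z\land y)$, and one must argue that the outer two joinands are absorbed into the middle one, i.e.\ $b\land z\le L_2$ and $x\land z\land y\le L_2$. Both inequalities follow from monotonicity alone --- $b\land z\le z$ and $b\land z\le b\le b\lor(x\land y)$, and likewise $x\land z\land y\le z$ and $x\land z\land y\le x\land y\le b\lor(x\land y)$ --- with no recourse to distributivity, which is exactly the temptation to be avoided. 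Assembling these direct computations with the duality principle yields all the stated identities.
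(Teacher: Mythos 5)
Your proposal is correct and follows essentially the same route as the paper, whose entire proof is the one-line remark that everything follows by direct computation from the lattice axioms, in particular absorption $a\lor(a\land x)=a=a\land(a\lor x)$; you simply carry out those computations explicitly, and your verification of the two non-collapsing cases $L(x,0,y,b,z)=L_2$ and $U(x,1,y,b,z)=U_2$ via monotonicity is exactly the intended argument. The order-duality pairing $L_1\leftrightarrow U_1$, $L_2\leftrightarrow U_3$, $L_3\leftrightarrow U_2$, $L_4\leftrightarrow U_4$ under $\land\leftrightarrow\lor$, $a\leftrightarrow b$ is a correct and economical bookkeeping device (it is Lemma \ref{la:anti} in disguise), not a different method.
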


\begin{proof}
All claims follow by direct computation using the defining identities
of a lattice, in particular,
$a\lor (a \land x) = a = a \land (a \lor x)$.
\end{proof}

\begin{lemma}
The function  $L$ is {\em monotonic}: if ${\bf x}\leq {\bf x}'$
(meaning that $x_i \leq x_i'$ for $i=1,\ldots,5$), then
$L({\bf x}) \leq L({\bf x}')$.
The same holds for $U,L_i,U_i$, $i=1,\ldots,5$.
Moreover, for $i=1,\ldots,4$,
$$
L_i \leq L, \qquad U \leq U_i,
$$
and when the lattice $\cX$ is bounded, this also holds for $i=5$.
\end{lemma}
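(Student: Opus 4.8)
The plan is to reduce everything to a single structural observation: both lattice operations are order-preserving in each argument, i.e.\ if $u \leq u'$ and $v \leq v'$, then $u \land v \leq u' \land v'$ and $u \lor v \leq u' \lor v'$. This is immediate from the universal characterizations of meet and join; for instance $u' \lor v'$ is an upper bound of both $u$ and $v$, whence $u \lor v \leq u' \lor v'$, and dually for $\land$. Each of $L$, $U$, $L_i$, $U_i$ is a lattice term built from the variables using only $\land$ and $\lor$, with no order-reversing operation involved. Being a composition of monotonic maps, each such term is therefore monotonic in every one of its arguments, and hence monotonic with respect to the product order on $\cX^5$; a fully formal version proceeds by an easy induction on the complexity of the term. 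This settles all the monotonicity claims at once.

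For the inequalities $L_i \leq L$ and $U \leq U_i$ with $i = 1,\ldots,4$, I would simply invoke the defining property of join as a least upper bound and of meet as a greatest lower bound. Since $L = L_1 \lor L_2 \lor L_3 \lor L_4$, each summand satisfies $L_i \leq L$; dually, since $U = U_1 \land U_2 \land U_3 \land U_4$, each factor satisfies $U \leq U_i$. No computation is required.

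The one step with any content is the case $i = 5$ in a bounded lattice, and the idea is to combine the diagonal evaluations of Theorem \ref{th:diagonals} with monotonicity in the central variable $y$. By that theorem, $L_5(x,a,y,b,z) = L(x,a,0,b,z)$ and $U_5(x,a,y,b,z) = U(x,a,1,b,z)$, and note that $L_5$ and $U_5$ do not depend on $y$. Since $0 \leq y$, monotonicity of $L$ in the variable $y$ gives $L_5 = L(x,a,0,b,z) \leq L(x,a,y,b,z) = L$. Dually, since $y \leq 1$, monotonicity of $U$ yields $U = U(x,a,y,b,z) \leq U(x,a,1,b,z) = U_5$. This completes the argument.

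I do not anticipate a genuine obstacle here: monotonicity is purely structural, and the first four inequalities are definitional. The only mildly subtle point is recognizing that the $i = 5$ bounds are established not by direct manipulation of the formulas but by specializing $y$ to the bounds $0$ and $1$ and invoking the diagonal identities of Theorem \ref{th:diagonals} together with monotonicity in $y$—an instance where the diagonal values already computed pay off.
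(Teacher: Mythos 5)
Your proposal is correct and follows essentially the same route as the paper: monotonicity of $\land$ and $\lor$ propagates to all lattice terms, the inequalities for $i=1,\ldots,4$ are immediate from $L$ and $U$ being the join, resp.\ meet, of those terms, and the $i=5$ case is obtained by specializing $y$ to $0$ and $1$ via Item (4) of Theorem \ref{th:diagonals} and using monotonicity in $y$. Your write-up merely spells out the same steps in more detail.
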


\begin{proof}
Since both $\land$ and $\lor$ are monotonic operations, 
the same holds for $L$, etc.
The inequalities for $i=1,\ldots,4$ follow directly from the definition of $L$, resp.\ $U$, as join, resp.\ 
meet, of these expressions. For $i=5$, in the bounded case, this follows
by monotony from Item (4) of the preceding
theorem, since $0 \leq y$ and $y \leq 1$.
\end{proof}

Clearly, monotonic lattice morphisms induce morphisms of $L$ and of $U$. 
The following is obvious from Formulae (\ref{eqn:1}), (\ref{eqn:2}):

\begin{lemma}\label{la:anti}
If $\phi : \cX \to \cX$ is an {\em antitone lattice morphism}, i.e.,
$\phi(a \lor b) = \phi(a) \land \phi(b)$ and
$\phi(a \land b)=\phi(a) \lor \phi(b)$, then
\begin{align*}
\phi L (x,a,y,b,z) & = U(\phi x,\phi b,\phi y,\phi a,\phi z), \\
\phi U (x,a,y,b,z) & = L(\phi x,\phi b,\phi y,\phi a,\phi z).
\end{align*}
\end{lemma}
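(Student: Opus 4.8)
The plan is to prove Lemma \ref{la:anti} directly from the defining formulae \eqref{eqn:1} and \eqref{eqn:2}, using only the two antitone identities $\phi(a\lor b)=\phi(a)\land\phi(b)$ and $\phi(a\land b)=\phi(a)\lor\phi(b)$. The heart of the matter is purely formal: an antitone morphism turns $L$, which is a \emph{join of four meet-heavy terms}, into a \emph{meet of four join-heavy terms}, i.e.\ into something of the shape of $U$; the only real content is identifying \emph{which} permutation of the arguments makes the two sides coincide.

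First I would observe that $\phi$ exchanges the roles of $\land$ and $\lor$ uniformly throughout any expression. Applying $\phi$ to $L_1=b\land(z\lor(a\land y))$ and pushing $\phi$ inward via the two antitone rules gives $\phi(b)\lor(\phi(z)\land(\phi(a)\lor\phi(y)))$, which is exactly $U_1$ evaluated at the argument tuple $(\phi b,\phi a,\phi y,\phi z,\dots)$ once one tracks where each variable lands. Carrying this out term by term, $\phi L_i$ becomes a $U_j$-shaped term, and since $\phi$ converts the outer $\lor$ of $L=L_1\lor L_2\lor L_3\lor L_4$ into the outer $\land$ of a meet of four terms, the image $\phi L$ is globally of the form $U$ applied to transformed arguments. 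The same computation in reverse (using the other antitone rule first) shows $\phi U$ has the form of $L$.

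The main obstacle, and the only step requiring care, is bookkeeping the argument permutation. One must verify that the correct substitution is precisely $(x,a,y,b,z)\mapsto(\phi x,\phi b,\phi y,\phi a,\phi z)$, i.e.\ that the pair $(a,b)$ is \emph{swapped} while $x,y,z$ stay in place. The plausibility of this swap is exactly the content of Lemma \ref{la:Klein}: the Klein four-group invariance tells us that various reorderings of $(x,a,b,z)$ leave $L$ and $U$ unchanged, so the substitution is only well-defined up to that symmetry, and one checks that the $(a,b)$-transposition is the representative making $\phi L_i$ and $U_i$ match up. Concretely, I would tabulate the four terms: verify $\phi L_1 = U_2(\phi x,\phi b,\phi y,\phi a,\phi z)$, $\phi L_2=U_1(\dots)$, and the analogous matching for $L_3,L_4$ against $U_3,U_4$, where the index shuffle reflects the $a\leftrightarrow b$ swap. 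Because meet and join are commutative, the order in which the four transformed terms are listed is irrelevant, so once the four individual identities are confirmed, taking the meet of all four yields $\phi L=U(\phi x,\phi b,\phi y,\phi a,\phi z)$ immediately.

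I expect this to be genuinely routine once the permutation is pinned down, so rather than display all sixteen inner pushes I would verify the representative identity $\phi L_1=U(\cdots)_{\text{first factor}}$ in full and assert that the remaining three, together with the dual computation for $\phi U$, follow by the identical mechanism combined with the $\land/\lor$ symmetry of the defining formulae. The second displayed identity, $\phi U(x,a,y,b,z)=L(\phi x,\phi b,\phi y,\phi a,\phi z)$, is then either proved symmetrically or deduced formally: if $\phi$ happens to be an antitone involution it is the same statement, but in general one simply repeats the inward-pushing argument with the two antitone rules applied in the opposite order.
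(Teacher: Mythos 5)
Your approach---pushing $\phi$ through the defining formulae term by term and tracking the $a\leftrightarrow b$ swap---is exactly the direct verification the paper has in mind (it simply declares the lemma obvious from (\ref{eqn:1}) and (\ref{eqn:2})), and it is correct. One small bookkeeping correction: the pairing you assert is off---in fact $\phi L_1 = U_1$, $\phi L_2 = U_3$, $\phi L_3 = U_2$, $\phi L_4 = U_4$, all evaluated at $(\phi x,\phi b,\phi y,\phi a,\phi z)$, rather than $\phi L_1 = U_2$ and $\phi L_2 = U_1$; as you yourself note, this is harmless because only the unordered meet of the four transformed terms matters, so the conclusion stands.
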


\section{Characterization of distributive lattices} 

Recall that a lattice $\cX$  is called  \href{https://en.wikipedia.org/wiki/Distributive_lattice}{\em distributive} if,  for all $x,y,z \in \cX$,
$$
x \land (y \lor z) = (x \land y) \lor (x \land z).
$$
This is equivalent to the dual identity 
$$
x \lor (y \land z) = (x\lor y) \land (x \lor z).
$$

\begin{theorem}\label{th:distrib}
For every lattice $\cX$,  the following  properties are equivalent:
\begin{enumerate}
\item[\rm (i)]
The identity $U = L$ holds in $\cX$.
\item[\rm (ii)]
The lattice $\cX$ is distributive.
\end{enumerate}
\end{theorem}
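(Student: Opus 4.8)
The plan is to prove the two implications separately, reducing each to material already available rather than expanding the two four-term expressions by hand.

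For (ii) $\Rightarrow$ (i) I would avoid any direct manipulation and instead invoke the representation theory of distributive lattices. By the Birkhoff--Stone representation theorem, every distributive lattice $\cX$ admits an injective lattice homomorphism $\iota \colon \cX \hookrightarrow \cP(M)$ into the power-set lattice of some set $M$. Since $L$ and $U$ are built solely from $\land$ and $\lor$, every lattice homomorphism commutes with them, so $\iota\bigl(L(x,a,y,b,z)\bigr) = L\bigl(\iota x,\dots,\iota z\bigr)$ and likewise for $U$. By case (3) of Theorem~\ref{th:Main} the identity $L = U$ already holds in $\cP(M)$, hence $\iota(L) = \iota(U)$ on every quintuplet, and injectivity of $\iota$ forces $L = U$ in $\cX$. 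This uses only the power-set case of Theorem~\ref{th:Main}, whose proof (via decomposition into atoms) does not depend on the present statement, so there is no circularity.

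For (i) $\Rightarrow$ (ii) I would argue by contraposition, using the classical Dedekind--Birkhoff criterion: a lattice fails to be distributive exactly when it contains a copy of the diamond $M_3$ or the pentagon $N_5$ as a sublattice. Because $L$ and $U$ are lattice polynomials, their values on a quintuplet drawn from a sublattice $S \subseteq \cX$ agree whether computed in $S$ or in $\cX$; it therefore suffices to exhibit, inside each of $M_3$ and $N_5$, a single quintuplet on which $L \neq U$. For $M_3 = \{0,1,p,q,r\}$ (pairwise meets $0$, pairwise joins $1$) I would take $(x,a,y,b,z) = (p,q,r,q,p)$: here each $L_i$ reduces to a meet $p \land q$ of two distinct atoms and so $L = 0$, while each $U_i$ reduces to a join $p \lor q$ and so $U = 1$. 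For the pentagon $N_5 = \{0,1,s,t,c\}$ with $s < t$ and $c$ incomparable to both (so $s \land c = t \land c = 0$ and $s \lor c = t \lor c = 1$) I would take $(c,t,s,s,c)$, for which $L = t$ but $U = s$; one even finds $L > U$ here, consistent with the remark after Theorem~\ref{th:Main} that $L \le U$ may fail outside the modular setting. In either case $L \neq U$, so (i) fails, which completes the contrapositive.

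The routine content is the term-by-term evaluation of $L$ and $U$ on the two witnesses; the main work is conceptual. The genuine obstacle is the forward direction (ii) $\Rightarrow$ (i): checking the five-variable identity directly in an arbitrary distributive lattice would mean a long, unenlightening reduction of the eight sub-terms, and the key move is to offload it entirely onto the power-set case through an embedding. A secondary subtlety, explaining why one cannot simply specialize $y \in \{0,1\}$ via Theorem~\ref{th:diagonals}, is that in $M_3$ such specializations always collapse --- joins of distinct atoms jump to $1$ and meets drop to $0$ --- and so cannot detect non-distributivity; the witness must place a genuine middle element in the central slot $y$, which is precisely what $(p,q,r,q,p)$ does.
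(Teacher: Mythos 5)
Your proof is correct, but the forward direction takes a genuinely different route from the paper. For (ii) $\Rightarrow$ (i) the paper computes directly: distributivity is used to rewrite each $L_i,U_i$, yielding the normal forms $L = L_5^{(3)} \lor (L_5^{(1)} \land y)$ and $U = U_5^{(2)} \land (U_5^{(1)} \lor y)$, and then the equality $L=U$ is deduced from the small ``cube'' lattice generated by the $L_5^{(i)},U_5^{(i)}$ (Proposition \ref{prop:4-lattice}). You instead embed $\cX$ into a power set via Birkhoff--Stone and pull back the identity from $\cP(M)$, where it is verified by the Boolean tables of Section \ref{sec:associativity}. Your route is shorter and is exactly the strategy the paper itself uses later for Theorem \ref{th:associativity}; you are also right that it is non-circular, \emph{provided} you cite the independent table verification (the tables in Section \ref{sec:associativity} and Remark \ref{rk:strange}) rather than Theorem \ref{th:Main}(3) itself, since in the paper's logical order that case is a corollary of the very theorem you are proving. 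What the paper's computational route buys, and yours does not, are the explicit normal forms (\ref{eqn:A})--(\ref{eqn:B}) and the cube lattice, which are reused throughout (Corollary \ref{cor:cases}, the hexads, the arithmetic tables); what yours buys is brevity, at the cost of importing the representation theorem as a black box. For (i) $\Rightarrow$ (ii) your argument is essentially the paper's: same $M_3$ witness up to relabelling (both give $L=0$, $U=1$), and an equally valid $N_5$ witness --- the paper places the incomparable element only in the $z$ slot and concludes $U \le a < b \le L$ from $L_1$ and $U_1$ alone, while your quintuplet $(c,t,s,s,c)$ gives $L=t>s=U$; both computations check out.
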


\begin{proof}
(ii) $\Rightarrow$ (i): 
If $\cX$ is distributive, 
the expressions $L_i,U_i$ can be transformed
\begin{align*}
L_1 & = (b\land (z \lor (a \land y)))= (b\land z) \lor (b \land a \land y), 
\\
L_2 & = (z \land b) \lor (z \land x \land y),\\
L_3 & = (x\land a) \lor (x \land z \land y),\\
L_4 & = (a \land x) \lor (a \land b \land y), \\
U_1 & = (a \lor (z \land (b \lor y))) = (a\lor z) \land (a \lor b \lor y) \\
U_2 & = (x\lor b) \land (x\lor z \lor y)  \\
U_3 & = (z\lor a) \land (z \lor x \lor y) \\
U_4 & = (b\lor x) \land (b \lor a \lor y) .
\end{align*}
Using this, we get, using the terms defined by  (\ref{eqn:L5U5}),
(\ref{eqn:L5'}), (\ref{eqn:L5''}),
\begin{align}
L & = (a \land x) \lor (b \land z) \lor (a\land b \land y) \lor (x \land z \land y)
= L_5^{(3)} \lor (L_5^{(1)}  \land y),
\label{eqn:A}
\\
U & = (a \lor z) \land (x \lor b) \land (a \lor b \lor y) \land (x \lor z \lor y) =  U_5^{(2)} \land (U_5^{(1)}
 \lor y) , \label{eqn:B}
\end{align}
where we have abbreviated
$L_5^{(i)} = L_5^{(i)}(x,a,y,b,z)$, and
$U_5^{(i)} = U_5^{(i)}(x,a,y,b,z)$.
These elements generate a lattice having a remarkably simple
structure:

\begin{proposition}\label{prop:4-lattice}
Fix elements $x,a,b,z$ in a distributive lattice $ \cX$. 
Then 
for $\{ i,j , k\} = \{ 1,2, 3 \}$,
writing $L_5^{(i)} = L_5^{(i)}(x,a,y,b,z)$, and
$U_5^{(i)} = U_5^{(i)}(x,a,y,b,z)$,
\begin{align*}
U_5^{(i)} \land U_5^{(j)}   = L_5^{(k)} , & \qquad
L_5^{(i)} \lor L_5^{(j)}   = U_5^{(k)} .
\end{align*}
The $6$ elements $L_5^{(i)}, U_5^{(i)}$
($i=1,2,3$)
generate a lattice of $8$ elements, which is a 
homomorphic image of  the lattice of subsets of $\{ 1,2,3\}$,
as indicated by the diagram:
\begin{equation*}
\xymatrix{ &  U_5^{(3)} \lor U_5^{(2)}  \lor U_5^{(1)}    \ar@{-}[rd] \ar@{-}[ld]
 \ar@{-}[d]  & \\
 U_5^{(3)}     \ar@{-}[d] \ar@{-}[rd]& U_5^{(2)}  \ar@{-}[rd] \ar@{-}[ld] &  U_5^{(1)}  \ar@{-}[d] \ar@{-}[ld] \\
L_5^{(1)}   \ar@{-}[rd] & L_5^{(2)}  \ar@{-}[d]  &  L_5^{(3)}  \ar@{-}[ld] \\
&  L_5^{(1)} \land  L_5^{(2)}  \land L_5^{(3)}   & 
}
\end{equation*}
\end{proposition}

\begin{proof}
In the following, we drop the  lower index $5$. 
For instance, if $(i,j,k)=(1,2,3)$, then
by distributivity, 
$$
L^{(3)} =(b \land z) \lor (a \land x)=
(b\lor a)\land (z\lor x) \land (b\lor x) \land (z \lor a)=U^{(1)} \land U^{(2)} ,
$$
and similarly for the other relations.
It follows,
 for $\{ 1,2,3 \} = \{ i, j ,k \}$, that
\begin{align*}
L^{(i)}\land L^{(j)} & = ( U^{(j)} \land U^{(k)} ) \land ( U^{(i)} \land U^{(k)}) 
\\
&= U^{(1)} \land U^{(2)} \land U^{(3)} 
\\
& = L^{(i)} \land L^{(k)},
\end{align*}
whence
$$
L^{(i)}\land L^{(j)} = L^{(1)} \land L^{(2)} \land L^{(3)} .
$$
In the same way,
$$
U^{(i)} \lor U^{(j)} = L^{(1)} \lor L^{(2)} \lor L^{(3)} =
U^{(1)} \lor U^{(2)} \lor U^{(3)} 
$$
which implies also $U^{(i)} \land L^{(i)} = L^{(1)} \lor L^{(2)} \lor L^{(3)}$, etc. 
\end{proof}


As a particular case of the proposition, we have
 $$
 U_5^{(2)} \land L_5^{(3)}  = (L_5^{(1)}  \lor L_5^{(3)}) \land L_5^{(3)}  = L_5^{(3)} = U_5^{(1)} \land U_5^{(2)}.
$$
This is used in the fifth equality of the followig computation, along
with  distributivity and the relation $L_5^{(3)} \lor L_5^{(1)} = U_5^{(2)}$:
\begin{align*}
L & =  L_5^{(3)} \lor (L_5^{(1)}  \land y)  \\
& = (L_5^{(3)}  \lor L_5^{(1)}) \land (L_5^{(3)} \lor y) \\
& =  U_5^{(2)} \land (L_5^{(3)}  \lor  y) \\
& = (U_5^{(2)}  \land L_5^{(3)})  \lor (U_5^{(2)} \land y) \\
&  
 = (U_5^{(1)} \land U_5^{(2)}) \lor (U_5^{(2)} \land y) \\
 &
= U_5^{(2)} \land (U_5^{(1)} \lor y) = U ,
\end{align*}

\ssk
(i) $\Rightarrow$ (ii): 
Since every non-distributive lattice 
\href{https://en.wikipedia.org/wiki/Distributive_lattice#Characteristic_properties}{contains  a sublattice
isomorphic to the diamond lattice $M_3$ or to the pentagon lattice $N_5$}
(see \cite{BS}), it is enough to show that the identity $L = U$ is not satisfied
in $M_3$ and in $N_5$. 
First assume that $\cX$ is the {\em diamond lattice} 
$$
M_3 = \{ 0, u,v,w,1 \}, \quad
 u \land v = 0 = u \land w = v \land w, \quad
u \lor v = 1 = v \lor w = w \lor u,
$$
and choose
$x=z=u$, $y=v$, $a=b=w$, as indicated by the Hasse diagram:
$$
\xymatrix{ &  1  \ar@{-}[rd] \ar@{-}[d]   \ar@{-}[ld]   & \\
x=z   \ar@{-}[rd] & y \ar@{-}[d] &  a=b \ar@{-}[ld] \\
& 0 & 
}
$$
We have $a \land y =0= x \land y = z \land y = 
b \land y$, and  $a \lor y =1= x \lor y = z \lor y = 
b \lor y$,
and  (with the four terms ordered as in Equations
(\ref{eqn:1})  and
(\ref{eqn:2})
\begin{align*}
L(x,a,y,a,x) & =
(b \land z) \lor (z \land b)  \lor (x\land a)\lor (b\land x)
\\
&=0 \lor 0 \lor 0 \lor 0  = 0 , \\
U(x,a,y,a,x) & = (a \lor z)\land (x \lor b)  \land
(x\lor a) \land (b\lor x) 
\\
&=  1 \land 1 \land 1 \land 1  =1 .
\end{align*}
Thus $L=U$ does not hold in $M_3$.
Next, assume that $\cX=N_5$ is the {\em pentagon lattice},
and choose $a=y=u$, $b=w$, $z=v$, and $x$ any of the
elements of $N_5$: 
$$
N_5 = \{ 0, u,v,w, 1 \}, \qquad 0 < u < w < 1 , \quad 0 < v < 1 ,
$$
$$
\xymatrix{ &  1  \ar@{-}[rd]  \ar@{-}[ld]   & \\
b  \ar@{-}[d]  & &  z  \ar@{-}[ddl] \\
 a=y \ar@{-}[rd] & & 
 \\
& 0 & 
}
$$
Since $L =L_1 \lor L_2 \lor L_3 \lor L_4$,  we have
$L_1 \leq L$, and likewise $U \leq U_1$. But
\begin{align*}
L_1 = b \land (z \lor (a \land y)) & = b \land (z \lor a) = b \land 1 = b ,\\
U_1 = a \lor (z \land (b \lor y)) & = a \lor (z \land b) = a \lor 0 = a,
\end{align*}
whence $U \leq U_1 = a < b = L_1 \leq L$, and hence $L$ is not equal to $U$ in $N_5$. 
\end{proof}

\begin{remark}
The characterization of  distributive lattices by the identity $L=U$ can be seen as a
``higher'' analog of the known fact (cf.\ \cite{Bi}, II., Theorem 8) that {\em $\cX$ is distributive iff it satisfies the
median law}
$$
(x\lor y) \land (y \lor z) \land (z \land x) =
(x \land y) \lor (y \land z) \lor (y \land x).
$$
\end{remark}

\begin{remark}
The cubic lattice from Prop.\ \ref{prop:4-lattice} can be seen as part of the
free distributive lattice on $4$ generators.
Its top element is the join of the 6 possible meets of the generators,
and its bottom element the meet of the 6 possible joins.
When two of the generators coincide (say, $x=a$), then we get another part,
which is just a square (say, $L^{(1)} = U^{(2)} = L^{(3)}$ and
$U^{(1)} = L^{(2)} = U^{(3)}$). This square is the product of the trivial
lattice given by the median element (preceding remark) and a lattice
with $2$ generators.
\end{remark}

\begin{corollary}\label{cor:cases}
The following identities hold:
\begin{enumerate}
\item
In the {\em arithmetic case} (lattice $\N_0$  with $\lor = \gcd$,
$\land = \lcm$)
\begin{align*}
\gcd \bigl(\lcm (a,x), \lcm(b,z),\lcm(a,b,y),\lcm(x,y,z)  \bigr)   & = 
\\
\lcm(\gcd (a,z),\gcd(x,b),\gcd(a,b,y),\gcd(x,y,z)).
\end{align*}
\item
In the {\em totally ordered case} (chain with $\lor = \max$, $\land = \min$)
\begin{align*}
\max (\min (a,x), \min(b,z),\min(a,b,y),\min(x,y,z)) & = 
\\
\min(\max (a,z),\max(x,b),\max(a,b,y),\max(x,y,z)).
\end{align*}
\item
In the {\em Boolean case} (power set $\cX=\cP(M)$
  with $\lor = \cup$, $\land = \cap $)
\begin{align*}
(a \cap x) \cup (b \cap z) \cup (a \cap b\cap y) \cup (x \cap y \cap z) &=
\\
(a \cup z) \cap (x \cup b) \cap (a \cup b \cup y) \cap (x \cup y \cup z).
\end{align*}
\end{enumerate}
\end{corollary}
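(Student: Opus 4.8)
The plan is to obtain Corollary \ref{cor:cases} as an immediate specialization of Theorem \ref{th:distrib}. The point is that all three lattices listed are distributive, so the identity $L = U$ holds in each; the three displayed identities are then nothing but $L = U$ written out after substituting the concrete meet and join. Concretely, I would invoke the normal forms established in the proof of Theorem \ref{th:distrib} for a distributive lattice, namely (\ref{eqn:A}) and (\ref{eqn:B}),
\[
L = (a \land x) \lor (b \land z) \lor (a \land b \land y) \lor (x \land z \land y),
\]
\[
U = (a \lor z) \land (x \lor b) \land (a \lor b \lor y) \land (x \lor z \lor y),
\]
together with the equality $L = U$ guaranteed by that theorem.

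First I would check distributivity in each of the three cases. For a chain the distributive law for $\min$ and $\max$ is immediate, and for a power set the distributivity of $\cup$ over $\cap$ is standard. For the arithmetic lattice I would use the prime-valuation map $n \mapsto (v_p(n))_p$, which embeds $(\N_0, \gcd, \lcm)$ into the product of chains $\prod_p (\N_0 \cup \{\infty\})$ (with $0 \mapsto (\infty)_p$ and $1 \mapsto (0)_p$), where $\gcd$ and $\lcm$ act coordinatewise as $\min$ and $\max$. Since a product of chains is distributive and distributivity passes to sublattices (and is self-dual, so the orientation of the order is irrelevant here), the arithmetic lattice is distributive.

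With distributivity in hand, each identity drops out by substitution into the two normal forms. Putting $\land = \lcm$ and $\lor = \gcd$ turns $L = U$ into identity (1); putting $\land = \min$ and $\lor = \max$ turns it into identity (2); and putting $\land = \cap$ and $\lor = \cup$ turns it into identity (3). Each of these steps is a purely mechanical rewriting of (\ref{eqn:A}) and (\ref{eqn:B}).

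I expect no real obstacle, as the corollary is essentially Theorem \ref{th:distrib} read in three concrete models. The only place calling for a word of justification is the distributivity of the arithmetic lattice, and even there the valuation embedding into a product of chains settles it at once; the presence of the bottom element $0$ (all valuations infinite) is harmless, since it does not affect the coordinatewise description of $\gcd$ and $\lcm$.
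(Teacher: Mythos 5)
Your proposal is correct and follows essentially the same route as the paper: the paper's proof simply notes that all three lattices are distributive and therefore satisfy $L=U$ in the normal forms (\ref{eqn:A}) and (\ref{eqn:B}). The only difference is that you spell out the distributivity of $(\N_0,\gcd,\lcm)$ via the valuation embedding into a product of chains, which the paper takes as well known.
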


\begin{proof}
All three lattices are well-known to be distributive, hence satisfy
$L = U$ in the form given by (\ref{eqn:A}) and
(\ref{eqn:B}).
\end{proof}

\begin{theorem}
Assume $\cX$ is a lattice satisfying the identity $L \leq U$. Then this lattice is
modular.
\end{theorem}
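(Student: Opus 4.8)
The plan is to argue by contraposition, reducing to the pentagon lattice $N_5$ exactly as in the implication (i) $\Rightarrow$ (ii) of Theorem \ref{th:distrib}, but now tracking the direction of the inequality rather than merely the failure of equality. I would invoke the classical criterion of Dedekind: a lattice is modular if and only if it contains no sublattice isomorphic to the pentagon $N_5$. Granting this, it suffices to exhibit, in any lattice containing a copy of $N_5$, a quintuplet of arguments for which $L \leq U$ fails.

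So suppose $\cX$ is not modular, and fix a sublattice $S = \{0,u,v,w,1\} \cong N_5$ with $0 < u < w < 1$ and $0 < v < 1$. I take the same substitution as in the earlier proof, namely $a = y = u$, $b = w$, $z = v$, with $x$ arbitrary in $S$. Because $L$ and $U$ are lattice polynomials in the five arguments and $S$ is closed under $\land$ and $\lor$, the values $L(x,a,y,b,z)$ and $U(x,a,y,b,z)$ are computed entirely inside $S$ and therefore agree with the corresponding values in the ambient lattice $\cX$; this is the only point that needs a word of justification, and it is immediate.

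I would then reuse the computation already carried out in the proof of Theorem \ref{th:distrib}: with this substitution, $L_1 = b \land (z \lor (a \land y)) = b \land (z \lor a) = b \land 1 = b$ and $U_1 = a \lor (z \land (b \lor y)) = a \lor (z \land b) = a \lor 0 = a$. Invoking the monotonicity inequalities $L_1 \leq L$ and $U \leq U_1$ established in the lemma above, I obtain the chain $U \leq U_1 = a = u < w = b = L_1 \leq L$, so that $U < L$ and the identity $L \leq U$ indeed fails in $\cX$. By contraposition, the hypothesis $L \leq U$ forces $\cX$ to be modular.

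The argument carries essentially no obstacle: it is a lighter version of the pentagon half of Theorem \ref{th:distrib}, since modularity, unlike distributivity, is governed by $N_5$ alone (the diamond $M_3$ is itself modular and plays no role here, which is exactly why we do not need the stronger conclusion $L = U$). The only subtlety worth flagging is the transfer of the strict inequality $U < L$ from the sublattice $S$ to the whole of $\cX$, and this holds precisely because $L$ and $U$ are term functions built solely from $\land$ and $\lor$.
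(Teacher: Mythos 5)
Your proof is correct and follows exactly the paper's own route: contraposition via Dedekind's $N_5$ criterion, combined with the pentagon computation $U \leq U_1 = a < b = L_1 \leq L$ already carried out in the proof of Theorem \ref{th:distrib}. The extra remark about $L$ and $U$ being term functions (so that the failure transfers from the sublattice to $\cX$) is a justified point the paper leaves implicit, but it does not change the argument.
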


\begin{proof}
If $\cX$ is not modular, then 
\href{https://en.wikipedia.org/wiki/Modular_lattice#Examples}{it contains the pentagon lattice $N_5$ 
as sublattice} (see \cite{Bi}, I., Theorem 12), and as we have seen in
the preceding proof, $L \leq U$ does not hold in $N_5$.
\end{proof}

\begin{remark}\label{rk:conjecture}
What about  the converse: does modularity imply $L \leq U$? 
As mentioned in the introduction,
the identity $L \leq U$
does hold in Grassmannians, which form
an important class of modular lattices. 
We shall come back to this question in subsequent work.
\end{remark}

\begin{theorem}
Let $\cX$ be a distributive lattice, and
$\phi:\cX \to \cX$ antitone.
Then 
$$
\phi U(x,a,y,b,z) = U (\phi x,\phi b ,\phi y, \phi a, \phi z).
$$
In particular, this holds 
when $\cX$ is a
 \href{https://en.wikipedia.org/wiki/Boolean_algebra_(structure)}{\em Boolean algebra} (complemented distributive lattice), with complement map
 $\phi = \lnot:\cX \to \cX$.
\end{theorem}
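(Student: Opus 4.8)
The plan is to deduce this statement immediately by composing the two results already established, namely the antitone transformation formula of Lemma~\ref{la:anti} and the distributivity identity $L=U$ of Theorem~\ref{th:distrib}. No direct manipulation of the defining formulae (\ref{eqn:1}), (\ref{eqn:2}) will be needed.

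First I would invoke Lemma~\ref{la:anti}, which for any antitone lattice morphism $\phi$ gives
$$
\phi U(x,a,y,b,z) = L(\phi x,\phi b,\phi y,\phi a,\phi z).
$$
This already expresses $\phi U$ as the map $L$ evaluated at the $\phi$-transformed arguments (with $a$ and $b$ interchanged). The only discrepancy with the desired conclusion is that the right-hand side features $L$ rather than $U$, so the remaining task is simply to replace $L$ by $U$ there.

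Next, since $\cX$ is assumed distributive, Theorem~\ref{th:distrib} guarantees that $L=U$ holds identically on $\cX$, in particular at the point $(\phi x,\phi b,\phi y,\phi a,\phi z)$. Substituting this into the formula above yields
$$
\phi U(x,a,y,b,z) = L(\phi x,\phi b,\phi y,\phi a,\phi z) = U(\phi x,\phi b,\phi y,\phi a,\phi z),
$$
which is exactly the claimed identity. (I would read the hypothesis ``$\phi$ antitone'' as ``antitone lattice morphism'', so that Lemma~\ref{la:anti} applies.) For the concluding Boolean case I would note that the complement map $\lnot$ is an antitone involution obeying the De Morgan laws $\lnot(a\lor b)=\lnot a\land\lnot b$ and $\lnot(a\land b)=\lnot a\lor\lnot b$, hence is an antitone lattice morphism, and that every Boolean algebra is by definition distributive; thus both hypotheses hold and the identity applies with $\phi=\lnot$.

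I do not expect any genuine obstacle: the substantive work resides entirely in Lemma~\ref{la:anti} and Theorem~\ref{th:distrib}, and the present statement is merely their composition. The one point worth flagging is the implicit strengthening of ``antitone'' to ``antitone lattice morphism'', without which Lemma~\ref{la:anti} could not be applied.
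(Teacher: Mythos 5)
Your proposal is correct and is exactly the paper's own argument: the paper proves this theorem by combining Lemma~\ref{la:anti} with the identity $U=L$ from Theorem~\ref{th:distrib}, just as you do. Your remark that ``antitone'' must be read as ``antitone lattice morphism'' is a fair and accurate observation about the hypothesis needed for Lemma~\ref{la:anti} to apply.
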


\begin{proof}
This
follows from $U=L$, together with Lemma \ref{la:anti}.
\end{proof}

\begin{example}\label{ex:anti-auto}
The lattice $\N_0$ does not carry any anti-automorphism. But
let $N \in \N$, $N > 1$, and $K$ a divisor of $N$.
Then the distributive
lattice (interval) $\cX = [K,N] = \{ d \in \N \mid \, K\vert d, \, \, d \vert N \}$ carries
an anti-automorphism $\phi (d) = \frac{KN}{d}$.
If $\frac{N}{K}$ contains only simple prime-powers as factors, then
$\phi$ is a  complement map, but otherwise it's not. 
\end{example}

In the distributive case, the six versions 
$\sigma.L = \sigma.U$ ($\sigma \in \fS_3$) of $L=U$, organized as a ``hexad''
according to  the scheme from
(\ref{eqn:hexad2}), are explicitly given as follows, in terms of
 $L_5^{(i)}$ and $U_5^{(i)}$. 
We use that $\sigma. L_5^{(i)} = L_5^{(\sigma(i))}$, resp.\
$\sigma. U_5^{(i)} = U_5^{(\sigma(i))}$.
Note that the invariance group in $\fS_4$ of each term
$L_5^{(i)}$, resp., $U_5^{(i)}$, is the subgroup of order
$8$ generated by the Klein $4$-group together with the
transposition $(i 4)$. 
There are $3$ such subgroups, all
isomorphic to a dihedral group $D_4$.
\begin{equation*}\label{eqn:hexad2bis}
\xymatrix{ &  {\red L =   L_5^{(3)} \lor (L_5^{(1)}  \land y) }  \ar@{-}[rd] \ar@{-}[ld] & \\
{\blue (12) L =  L_5^{(3)} \lor (L_5^{(2)}  \land y)  }  \ar@{-}[d] & &  {\blue (13) L =  L_5^{(1)} \lor (L_5^{(3)}  \land y)}  \ar@{-}[d] \\
{\red  (132)L =  L_5^{(2)} \lor (L_5^{(3)}  \land y)  }  \ar@{-}[rd] & &  {\red (123)L =  L_5^{(1)} \lor (L_5^{(2)}  \land y)} \ar@{-}[ld] \\
& \blue{ (23)L =  L_5^{(2)} \lor (L_5^{(1)}  \land y) } & 
}
\end{equation*}
When $L=U$,  this hexad coincides with
\begin{equation*}\label{eqn:hexad2ter}
\xymatrix{ &  {\red U  =   U_5^{(2)} \land (U_5^{(1)}  \lor y) }  \ar@{-}[rd] \ar@{-}[ld] & \\
{\blue (12) U =  U_5^{(1)} \land (U_5^{(2)}  \lor y)  }  \ar@{-}[d] & &  {\blue (13) U =  U_5^{(2)} \land (U_5^{(3)}  \lor y)}  \ar@{-}[d] \\
{\red  (132)U =  U_5^{(1)} \land (U_5^{(3)}  \lor y)  }  \ar@{-}[rd] & &  {\red (123)U =  U_5^{(3)} \land (U_5^{(2)}  \lor y)} \ar@{-}[ld] \\
& \blue{ (23)U =  U_5^{(3)} \land (U_5^{(1)}  \lor y) } & 
}
\end{equation*}

\begin{remark}
Under {\em duality of lattices}, i.e., exchange of $\land$ and $\lor$,
that is, exchange of $\leq$ and $\geq$,
$U_5^{(i)}$ and $L_5^{(i)}$ exchange, for $i=1,2,3$, and hence
the first hexad exchanges with the ``dual'' of the second (where
by ``dual'' hexad we mean the one obtained after applying a
central symmetry). 
When the lattice is distributive, duality thus correponds to central
symmetry.
\end{remark}

\section{A glimpse on the chain case}

The statement of Item (2) of Corollary \ref{cor:cases} can be refined:

\begin{theorem}\label{th:order}
Let $M$ be a totally ordered set,  
with $\land = \min$ and $\lor = \max$.
Then
\begin{enumerate}
\item
if $a \leq  y \leq b$, then 
$U = L = \max(a,\min(z,b)) = \min(\max(a,z),b)$,  
\item
if $b \leq y \leq a$, then
$U=L= \min(a,\max(x,b)) = \max(\min(a,x),b)$,
\item
if $x \leq  y \leq z$, then 
$U = L = \max(x,\min(b,z)) = \min(\max(x,b),z)$, 
\item
if $z \leq  y \leq x$, then 
$U = L = \min(x,\max(a,z))=\max(\min(x,a),z)$, 
\item 
if $a,b \leq y \leq x,z$, then
$U = L = y$,
\item
if $x,z \leq y \leq a,b$, then
$U=L = y$,
\item
if $a,x,b,z \leq y$, then 
$U = L = L_5 = \min(\max(b,z),\max(a,x))$,
\item
if $y \leq a,x,b,z$, then
$U = L = U_5 = \max(\min(a,z),\min(b,x))$. 
\end{enumerate}
\end{theorem}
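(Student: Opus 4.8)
The plan is to reduce everything to a single computation. Since a totally ordered set is a distributive lattice, Corollary \ref{cor:cases}(2) already gives $L=U$, so for each of the eight cases it suffices to evaluate the common value and match it to the stated formula. Two descriptions of this value will be used interchangeably: the chain specialisation, in which $L$ is the maximum of the four meets $\min(a,x),\min(b,z),\min(a,b,y),\min(x,z,y)$ and $U$ the minimum of the four joins $\max(a,z),\max(b,x),\max(a,b,y),\max(x,z,y)$; and the compact distributive forms $L=L_5^{(3)}\lor(L_5^{(1)}\land y)$ and $U=U_5^{(2)}\land(U_5^{(1)}\lor y)$ of (\ref{eqn:A})--(\ref{eqn:B}). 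Each assertion then amounts to collapsing nested minima and maxima once the position of $y$ among the other four variables is fixed.

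First I would cut the work down using symmetry. By Lemma \ref{la:Klein} the functions $L,U$ are invariant under the Klein $4$-group acting on $(x,a,b,z)$, and this action carries the hypothesis of (1) onto those of (2), (3), (4): for instance $(x\,a)(b\,z)$ turns $a\le y\le b$ into $x\le y\le z$ and simultaneously transforms the answer $\max(a,\min(z,b))$ into $\max(x,\min(b,z))$, which is precisely case (3). The same group sends (5) onto (6). Hence it is enough to prove one representative from each orbit, say (1) and (5), together with the self-paired extremal cases (7) and (8).

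For case (1), $a\le y\le b$, substituting into the $\min$-description gives $\min(a,b,y)=a$, absorbs $\min(a,x)\le a$, and absorbs $\min(x,z,y)\le\min(z,b)$ (using $y\le b$) into $\min(b,z)$, leaving $L=\max(a,\min(z,b))$; the alternative form $\min(\max(a,z),b)$ is then the distributive identity $a\lor(z\land b)=(a\lor z)\land(a\lor b)$ together with $a\lor b=b$. Case (5), $a,b\le y\le x,z$, is immediate: each of the four meets is at most $y$ while $\min(x,z,y)=y$, so $L=y$. For the extremal cases I would switch to the compact forms: in (7) the inequalities $a,x,b,z\le y$ force $L_5^{(1)}\le y$, so $L=L_5^{(3)}\lor L_5^{(1)}=U_5^{(2)}$ by Proposition \ref{prop:4-lattice}; dually in (8) the inequalities $y\le a,x,b,z$ give $L_5^{(1)}\land y=y\le L_5^{(3)}$, so $L=L_5^{(3)}$. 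These are exactly the limiting values recorded in Theorem \ref{th:diagonals}(4) as $y$ tends to the top, resp.\ the bottom.

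The one step where I would proceed with care is the precise pairing of variables in the two extremal answers. The method outputs $U_5^{(2)}=(a\lor z)\land(b\lor x)$ in (7) and $L_5^{(3)}=(b\land z)\lor(a\land x)$ in (8), and because the three terms $L_5^{(i)}$ (and likewise $U_5^{(i)}$) differ only by which of the three pairings of $\{a,x,b,z\}$ is chosen, a hand computation can easily land on the wrong index. I would therefore pin these down against Proposition \ref{prop:4-lattice} and Theorem \ref{th:diagonals}(4) rather than trust a direct evaluation. Apart from this bookkeeping the argument is routine; the only genuine content is the median/distributive identity used in cases (1)--(4), so no step poses a serious obstacle.
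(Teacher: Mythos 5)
Your argument is correct and is essentially the paper's own proof made explicit: the paper simply says that the claims follow ``by inspection of the expressions appearing in Item (2) of Corollary \ref{cor:cases}'', with the $120$ orderings cut to $30$ by Klein invariance, and your orbit bookkeeping (cases (1)--(4) form one $V$-orbit, (5)--(6) another, (7)--(8) are fixed) together with the absorption computations is just a cleaner way of organizing that inspection. The reduction of (1) to $\max(a,\min(z,b))$ and the identification of its second form via $a\lor(z\land b)=(a\lor z)\land(a\lor b)$ with $a\lor b=b$ are both fine, as is case (5).

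The substantive point concerns exactly the step you flagged as delicate. Your derivation of (7) and (8) gives $U_5^{(2)}=(a\lor z)\land(b\lor x)=\min(\max(a,z),\max(b,x))$ and $L_5^{(3)}=(b\land z)\lor(a\land x)=\max(\min(b,z),\min(a,x))$, and these are the \emph{correct} values --- they agree with Theorem \ref{th:diagonals}(4), which is the right cross-check. But they are \emph{not} the formulas printed in the statement, which use the other pairing of $\{a,x,b,z\}$ (and carry the labels $L_5$, $U_5$ in the opposite order). The printed formulas are actually false: for $(x,a,b,z)=(2,1,3,4)$ and $y$ above all four, one computes $L=U=3$, whereas $\min(\max(b,z),\max(a,x))=\min(4,2)=2$; dually for (8). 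So the discrepancy you sensed is a genuine typo in the statement, not an ambiguity in your computation. You did the right calculation but stopped short of completing the comparison; having pinned the values against Proposition \ref{prop:4-lattice} and Theorem \ref{th:diagonals}(4), you should have recorded explicitly that the statement's items (7) and (8) must read $U=L=U_5=(a\lor z)\land(b\lor x)$ and $U=L=L_5=(b\land z)\lor(a\land x)$ respectively.
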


\begin{proof}
This follows in each case by inspection of the expressions
appearing in Item (2) of Corollary \ref{cor:cases}.
(In principle, there are $5 ! = 120$ cases to be checked; due to
invariance under double transpositions, this reduces to 
$30$ cases, which can be checked by hand or by machine.) 
\end{proof}

\begin{remark}
What is the order-theoretic interpretation of these relations?
Just as the total order $\leq$ is encoded by $\min$ and
$\max$, the expression $L =U$ should encode some order-theoretic
concept. One may think of the 
\href{https://en.wikipedia.org/wiki/Cyclic_order}{\em cyclic order} 
corresponding to the total order, defined on the one-point completion
of $M$. When $M = \R$, then
this one-point completion is the projective line $\R \PP^1 = S^1$,
and $L$ is in this case related to the
{\em cross-ratio}. Thus $L$ could play the role of a
``cross-ratio type invariant for chains''.
\end{remark}

\section{Associativity}\label{sec:associativity}

In a distributive lattice $\cX$, 
 fixing a triple $(a,y,b)$, 
we define a binary ``product'' 
\begin{equation} \label{eqn:principal}
x \bullet  z := x \cdot_{a,y,b} z :=L(x,a,y,b,z)=U(x,a,y,b,z) .
\end{equation} 
We are going to show that this product is always associative, hence
defines a semigroup struture on $\cX$.
More generally, let us define:

\begin{definition}\label{def:product}
Let $\cX$ be an arbitrary lattice.
Fixing three of the five components of $\cX^5$, we define ``product maps''
$\cX^2 \to \cX$ by considering the remaining 
two components as ``variables''.
More specifically, the central component $y$ shall always belong to the ``fixed'' variables.
Explicitly,
for $(x,a,y,b,z) \in \cX^5$, we let
\begin{align*}
L^{(x,z)}_{(a,y,b)} & := L^{(z,x)}_{(b,y,a)} := x \bullet_{a,y,b}^L  z := L(x,a,y,b,z)  = L(z,a,y,b,x)
\\ 
L^{(a,z)}_{(x,y,b)}  &:=  L^{(z,a)}_{(b,y,x)} := L(x,a,y,b,z) = L(a,z,y,x,b),
\\
L^{(b,z)}_{(x,a,y)} & := L^{(z,b)}_{(a,x,y)} :=  L(x,a,y,b,z) = L(a,x,b,z,b) .
\end{align*} 
Retaining only the subscripts, this is represented by the  ``hexad'' of binary products
\begin{equation*}\label{eqn:hexad}
\xymatrix{ &  {\red L_{(a,y,b)}}  \ar@{-}[rd] \ar@{-}[ld] & \\
{\blue L_{(x,y,b)}}  \ar@{-}[d] & &  {\blue L_{(x,a,y)}}  \ar@{-}[d] \\
{\red  L_{(a,x,y)}}  \ar@{-}[rd] & &  {\red L_{(b,y,x)}} \ar@{-}[ld] \\
& \blue{ L_{(b,y,a)}} & 
}
\end{equation*}
The same definitions and conventions can be given for $U$, and for any other quintary map
$\cX^5 \to \cX$ that is invariant under the Klein $4$-group acting on $(x,a,b,z)$.
In other words, the ``hexad'' (\ref{eqn:hexad}) comes from $\bullet$ via the canonical action of the group
 $\fS_3 =  \fS_4 / V$ acting on the $4$ variables $(v_1,v_2,v_3,v_4)=(x,a,b,z)$
(Diagram (\ref{eqn:hexad2})).
Following a terminology from the theory of loops and quasigroups, 
for a fixed triple $(a,y,b)$, we call the
six products in the ``hexad'' given above the {\em parastrophes}
of the ``principal'' product $\bullet$. 
 \end{definition}

\begin{theorem}\label{th:associativity}
Let $\cX$ be a bounded distributive lattice, and fix a triple of elements
in $\cX$. Then any of the six products by the ``hexad'' given above
is associative. Products belonging to opposite vertices of the
hexad are opposite in the algebraic sense ($u \cdot^{\op} v =
v \cdot u$). In particular, products of type $L_{a,y,a}$ or
$L_{a,a,y}$ are  commutative. 
Moreover, all semigroups $(\cX,\cdot)$ thus obtained are
{\em weak bands}, i.e., left and right multiplications are idempotent:
$$
\forall u,v \in \cX : \qquad u(uv)=uv=(uv)v.
$$
\end{theorem}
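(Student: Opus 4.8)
The plan is to bypass any direct verification of associativity and reduce the whole statement to the power set case (3), where the product decomposes ``pointwise'' into elementary Boolean operations. First I would invoke the representation theorem for distributive lattices to fix an injective lattice morphism $\iota : \cX \hookrightarrow \cP(M)$ into the power set of some set $M$. Since $L$ (equivalently $U$, by Theorem \ref{th:distrib}) is a lattice polynomial in its five arguments, $\iota$ intertwines the product $\bullet_{a,y,b}$ on $\cX$ with the product $\bullet_{\iota a,\iota y,\iota b}$ on $\cP(M)$; as $\iota$ is injective, any of the identities in question (associativity, the weak-band law, commutativity) transfers back from the image to $\cX$. Hence it suffices to treat $\cX = \cP(M)$.

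In the power set case I would use the distributive form (\ref{eqn:A}), namely $L = (a\land x)\lor(b\land z)\lor(a\land b\land y)\lor(x\land y\land z)$, and evaluate membership at a single point $m\in M$. Writing $x_m\in\{0,1\}$ for the indicator of $m\in x$, the bit $(x\bullet z)_m$ depends only on $(x_m,a_m,y_m,b_m,z_m)$, and since the triple $(a,y,b)$ is fixed it is a Boolean function of the two variables $(x_m,z_m)$ determined by the three bits $(a_m,y_m,b_m)$. A short case analysis over these $2^3=8$ possibilities shows that the outcome is always one of the six elementary operations
\[
\false,\qquad \und,\qquad \rechts,\qquad \links,\qquad \oder,\qquad \true,
\]
on $\{0,1\}$ (the two constant maps, the two projections, and meet and join). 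Each of these is visibly associative and a weak band.

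The key point in the transfer is that set operations in $\cP(M)$ are performed independently at each point, while associativity and the weak-band law $u(uv)=uv=(uv)v$ are identities in a single ``coordinate'' $m$. Hence these identities hold for $\bullet$ as soon as they hold for every atom, \emph{even though distinct points $m$ may be governed by distinct atoms} according to the local values of the fixed triple. This yields associativity and the weak-band property at once. For the remaining assertions I would argue formally from Lemma \ref{la:Klein}: antipodal vertices of the hexad carry fixed triples that are reverses of one another, and the Klein element $(14)(23)$ gives $L(x,a,y,b,z)=L(z,b,y,a,x)$, i.e. $x\bullet_{a,y,b}z = z\bullet_{b,y,a}x$, which is exactly the statement that opposite vertices carry opposite products $u\cdot^{\op}v=v\cdot u$. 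When the two fixed entries other than $y$ coincide --- the cases $L_{a,y,a}$ and $L_{a,a,y}$ --- the triple is unchanged by this reversal, so the product equals its own opposite and is therefore commutative.

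I expect the only genuine difficulty to be conceptual rather than computational: one must justify cleanly that verifying the identities atom-by-atom is legitimate, despite the fixed triple inducing different atoms at different points of $M$. This is precisely where the pointwise nature of $\cP(M)$ and the single-coordinate character of associativity and the weak-band law are essential; once this reduction is isolated, the residual work (the eight-case table and the handful of one-line checks on $\{0,1\}$) is entirely routine.
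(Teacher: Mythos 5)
Your proposal is correct and follows essentially the same route as the paper: embed $\cX$ into a power set via the representation theorem, observe that the fixed triple partitions $M$ pointwise into regions where $\bullet$ acts as one of the six elementary connectors (true, false, left, right, and, or), check each of these is an associative weak band, and derive oppositeness/commutativity from the Klein-group symmetry exchanging $a$ and $b$. The only point to make explicit is that the eight-case pointwise check must also be carried out for the two genuinely distinct parastrophe types $L_{(x,a,y)}$ and $L_{(x,y,b)}$ (the $\fS_3$ action only identifies opposite vertices, not adjacent ones), which your generic framework handles but which the paper writes out as separate tables.
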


\begin{proof}
By the 
\href{https://en.wikipedia.org/wiki/Distributive_lattice#Representation_theory}{\em representation theorem for bounded distributive lattices}
(\cite{Stone}, cf.\ \cite{Bi}, Chapter IX, Theorem 11), 
we can imbed $\cX$ as a sublattice into the lattice of subsets
$\cP(M)$ of a set $M$.
Associativity (and the weak band property) are algebraic identities
 for $L$ in the sense of universal
algebra; if they hold in the lattice $\cP(M)$, then they hold also in the
lattice $\cX$.
Thus we shall assume in the sequel that $\cX = \cP(M)$, and 
we prove the claim in this case.
The advantage of $\cP(M)$ is that it has a {\em complementation},
which makes  it a {\em Boolean lattice}.

\begin{definition}\label{def:M} 
Assume $\cX = \cP(M)$, and denote for $u \in \cX$ by
 $\lnot u := M \setminus u$  its complement.
 Fix a triple $(a,y,b) \in \cX^3$. With respect to this datum, we decompose 
$$
M = M^\true \sqcup M^\false  \sqcup M^\rechts \sqcup M^\links \sqcup M^\und \sqcup M^\oder
$$
 into  a disjoint union of  six subsets, as follows:
\begin{enumerate}
\item
$\qquad M^\true := a \land y \land b$
\item
$\qquad M^\false := \lnot a \land \lnot y \land \lnot b$
\item
$\qquad M^\rechts := b \setminus a = b \land  (\lnot a)$
\item
$\qquad M^\links := a \setminus b = a \land (\lnot b)$
\item
$\qquad M^\oder := (a \land b) \setminus y = a \land b \land (\lnot y)$
\item
$\qquad M^\und := y \setminus (a \lor b) = y \land (\lnot a) \land  (\lnot b)$ 
\end{enumerate}
In other words, an element $\omega \in M$ belongs to these sets iff 
the triple of propositions $(\omega \in a, \omega \in y, \omega \in b)$
has the  following Boolean values:
\begin{enumerate}
\item
$\qquad (1,1,1)$
\item
$\qquad  (0,0,0)$
\item
$\qquad  (0,0,1)$ or $(0,1,1)$
\item
$\qquad  (1,0,0)$ or $(1,1,0)$
\item
$\qquad (1,0,1)$
\item
$\qquad  (0,1,0)$
\end{enumerate}
\end{definition}

\begin{remark}
The following arguments not only establish associativity of
 the ``principal'' product  (\ref{eqn:principal}), but also give
 an independent proof of the identity $L=U$ in case of a 
 power set lattice $\cX = \cP(M)$.
\end{remark}

\nin
Now let $(x,a,y,b,z) \in \cX^5$ and $\omega \in M$.
Then $\omega$ may belong, or not, to some of the $5$ compenents
of this quintuplet --
there are $2^5 = 32$ possible cases.  In the
following tables, we write 
 $0$ if $\omega$ does not belong to the set, and
 $1$ if $\omega$ belongs to the set. 
These $32$ cases can be partitioned according to the $6$ cases
defined above, as follows. 
For further information, we list also the Boolean values for 
$\omega$ belonging to elements of the lists
 ${\tt L} = (L_1,L_2,L_3,L_4)$ and 
 ${\tt U} = (U_1,U_2,U_3,U_4)$, as well as for $L_5$ and $U_5$.
 
 \msk
\nin
(1) Let $\omega \in M^\true$. 
The Boolean values for $\omega \in$ (or $\notin$)
$L,U,L_1,U_1,L_4,U_4$ all coincide.

\msk
\begin{center}
\begin{tabular}{l | l | l | l |  l     |   l | l  |   l        |    l       l  l             l  |   l}  
$x$   &  $a$   &  $y$ &  $b$  & $z$ &  list {\tt L} & $\lor {\tt L} = \land {\tt U}$
&  list {\tt U} & $L_5$ & $U_5$  \cr
\hline
$1$ & $1$ & $1$ & $1$ &$1$ &
$(1,1,1,1)$ & $\qquad 1$  & $(1,1,1,1)$ &
$1$ & 1
\\
$1$ & $1$ & $1$ & $1$ &$0$ &
$(1,0,1,1)$ & $\qquad 1$  & $(1,1,1,1)$ &
$1$ &  1
\cr 
$0$ & $1$ & $1$ & $1$ &$1$ &
$(1,1,0,1)$ & $\qquad 1$  & $(1,1,1,1)$ &
$1$ &  1
\cr 
$0$ & $1$ & $1$ & $1$ &$0$ &
$(1,0,0,1)$ & $\qquad 1$  & $(1,1,1,1)$ &
$ 0$ &  1 
\cr 
 \end{tabular}
\end{center}
\msk

\nin (2)  Let $\omega \in M^\false$.  
The Boolean values for $\omega \in$ (or $\notin$)
$L,U,L_1,U_1,L_4,U_4$ coincide.

\msk
\begin{center}
\begin{tabular}{l | l | l | l |  l     |   l | l  |   l        |    l       l  l             l  |   l}  
$x$   &  $a$   &  $y$ &  $b$  & $z$ &  list {\tt L} & $\lor {\tt L} = \land {\tt U}$
&  list {\tt U} & $L_5$ & $U_5$  \cr
\hline
$0$ & $0$ & $0$ & $0$ &$0$ &
$(0,0,0,0)$ & $\qquad 0$  & $(0,0,0,0)$ &
$0$ & $0$ 
\\
$0$ & $0$ & $0$ & $0$ &$1$ &
$(0,0,0,0)$ & $\qquad 0$  & $(0,0,1,0)$ &
$0$ &  0
\\
$1$ & $0$ & $0$ & $0$ &$0$ &
$(0,0,0,0)$ & $\qquad 0$  & $(0,1,0,0)$ &
$0$ &  0
\cr 
$1$ & $0$ & $0$ & $0$ &$1$ &
$(0,0,0,0)$ & $\qquad  0$  & $(0,1,1,0)$ & 
$ 0$ &  1
\cr 
\end{tabular}
\end{center}
\msk

\nin (3) Let $\omega \in M^\rechts$.  
The Boolean values for $\omega \in$ (or $\notin$)
 $L,U,L_1,U_1,L_5,U_5$ coincide.

\msk
\begin{center}
\begin{tabular}{l | l | l | l |  l     |   l | l  |   l        |    l       l  l             l  |   l}  
$x$   &  $a$   &  $y$ &  $b$  & $z$ &  list {\tt L} & $\lor {\tt L} = \land {\tt U}$
&  list {\tt U} & $L_5$ & $U_5$  \cr
\hline
$0$ & $0$ & $0$ & $1$ &$0$ &
$(0,0,0,0)$ & $\qquad 0$  & $(0,0,0,1)$ &
$0$ &  0
\\
0 & 0 & 0 & 1 & 1 & $(1,1,0,0)$ & $\qquad 1$ & $(1,1,1,1)$ & 1 & 1
\cr 
$1$ & $0$ & $0$ & $1$ &$0$ &
$(0,0,0,0)$ & $\qquad 0$  & $(0,1,0,1)$ &
$0$ &  0
\cr 
$1$ & $0$ & $0$ & $1$ &$1$ &
$(1,1,0,0)$ & $\qquad 1$  & $(1,1,1,1)$ &
$1$ & 1
\cr 
\hline
$0$ & $0$ & $1$ & $1$ &$0$ &
$(0,0,0,0)$ & $\qquad 0$  & $(0,1,0,1)$ &
$0$ &  0
\\
0 & 0 & 1 & 1 & 1 & $(1,1,0,0)$ & $\qquad 1$ & $(1,1,1,1)$ & 1 & 1
\cr 
$1$ & $0$ & $1$ & $1$ &$0$ &
$(0,0,0,0)$ & $\qquad 0$  & $(0,1,0,1)$ &
$0$ &  0
\cr 
$1$ & $0$ & $1$ & $1$ &$1$ &
$(1,1,0,0)$ & $\qquad 1$  & $(1,1,1,1)$ &
$1$ & 1
\cr 
 \end{tabular}
\end{center}
\msk
 
\nin (4) Let $\omega \in M^\links$.  The Boolean values for $\omega \in$ (or $\notin$)
$L ,U, L_4, U_4,L_5,U_5$ coincide. 

\msk
\begin{center}
\begin{tabular}{l | l | l | l |  l     |   l | l  |   l        |    l       l  l             l  |   l}  
$x$   &  $a$   &  $y$ &  $b$  & $z$ &  list {\tt L} & $\lor {\tt L} = \land {\tt U}$
&  list {\tt U} & $L_5$ & $U_5$  \cr
\hline
$0$ & $1$ & $0$ & $0$ &$0$ &
$(0,0,0,0)$ & $\qquad 0$  & $(1,0,0,0)$ &
$0$ &  0
\\
$0$ & $1$ & $0$ & $0$ &$1$ &
$(0,0,0,0)$ & $\qquad 0$  & $(1,0,1,0)$ &
$0$ &  0
\\
$1$ & $1$ & $0$ & $0$ &$0$ &
$(0,0,1,1)$ & $\qquad 1$  & $(1,1,1,1)$ &
$1$ &  1
\\
$1$ & $1$ & $0$ & $0$ &$1$ &
$(0,0,1,1)$ & $\qquad 1$  & $(1,1,1,1)$ &
$1$ &  1
\\
\hline
$0$ & $1$ & $1$ & $0$ &$0$ &
$(0,0,0,0)$ & $\qquad 0$  & $(1,0,0,0)$ &
$0$ &  0
\\
$0$ & $1$ & $1$ & $0$ &$1$ &
$(0,0,0,0)$ & $\qquad 0$  & $(1,0,1,0)$ &
$0$ &  0
\\
$1$ & $1$ & $1$ & $0$ &$0$ &
$(0,0,1,1)$ & $\qquad 1$  & $(1,1,1,1)$ &
$1$ &  1
\\
$1$ & $1$ & $1$ & $0$ &$1$ &
$(0,1,1,1)$ & $\qquad 1$  & $(1,1,1,1)$ &
$1$ &  1
\\
 \end{tabular}
\end{center}
\msk

\nin (5) Let $\omega \in M^\oder$. The Boolean values for $\omega \in$ (or $\notin$)
 $L,U,L_5, U_2,U_3$ coincide.

\msk
\begin{center}
\begin{tabular}{l | l | l | l |  l     |   l | l  |   l        |    l       l  l             l  |   l}  
$x$   &  $a$   &  $y$ &  $b$  & $z$ &  list {\tt L} & $\lor {\tt L} = \land {\tt U}$
&  list {\tt U} & $L_5$ & $U_5$  \cr
\hline
$0$ & $1$ & $0$ & $1$ &$0$ &
$(0,0,0,0)$ & $\qquad 0$  & $(1,0,0,1)$ &
$0$ &  1
\cr 
$0$ & $1$ & $0$ & $1$ &$1$ &
$(1,1,0,0)$ & $\qquad 1$  & $(1,1,1,1)$ &
$1$ &  1
\cr 
$1$ & $1$ & $0$ & $1$ &$0$ &
$(0,0,1,1)$ & $\qquad 1$  & $(1,1,1,1)$ &
$1$ &  1
\cr 
$1$ & $1$ & $0$ & $1$ &$1$ &
$(1,1,1,1)$ & $\qquad 1$  & $(1,1,1,1)$ &
$1$ &  1
\cr 
 \end{tabular}
\end{center}
\msk

\nin (6)  Let $\omega \in M^\und$. 
The Boolean values for $\omega \in$ (or $\notin$)
 $L,U,U_5,L_2,L_3$  coincide.
 
\msk
\begin{center}
\begin{tabular}{l | l | l | l |  l     |   l | l  |   l        |    l       l  l             l  |   l}  
$x$   &  $a$   &  $y$ &  $b$  & $z$ &  list {\tt L} & $\lor {\tt L} = \land {\tt U}$
&  list {\tt U} & $L_5$ & $U_5$  \cr
\hline
0 & 0 & 1 & 0 & 0 & $(0,0,0,0)$ & $\qquad 0$ & $(0,0,0,0)$ & 0 & 0 
\\
0 & 0 & 1 & 0 & 1 & $(0,0,0,0)$ & $\qquad 0$ & $(1,0,1,0)$ & 0 & 0 
\\
1 & 0 & 1 & 0 & 0 & $(0,0,0,0)$ & $\qquad 0$ & $(0,1,0,1)$ & 0 & 0 
\\
1 & 0 & 1 & 0 & 1 & $(0,1,1,0)$ & $\qquad 1$ & $(1,1,1,1)$ & 0 & 1
\\
 \end{tabular}
\end{center}

\msk
\begin{remark}\label{rk:strange}
Simple inspection of the tables shows
that the Boolean values for $L$ and $U$ coincide in all possible 
cases, whence a proof of $L=U$ in the power set case. 
Moreover, one may observe that
in all but one cases, $L$ coincides with $L_{23}:=L_2 \lor L_3$, resp.\ with
$L_{14}:=L_1 \lor L_4$, and in all but one cases, $U$ coincides with
$U_{14}:=U_1 \land U_4$, resp.\ with $U_{23}:=U_2 \land U_3$.
In all but two cases, $L$ coincides with $L_5$. Likewise for
$U$ and $U_5$. 
Also, the rule of defining $L$ and $U$ is rather close to a 
\href{https://en.wikipedia.org/wiki/Majority_function}{\em majority function}
(``the winner takes it all'': if $\omega$ belongs to at least $3$ of the $5$
sets, then it belongs to $L$ and to $U$) -- however, 
$4$ among the $32$ cases  violate this rule.
\end{remark}

Coming back to the proof of associativity of 
the product
$(x,z) \mapsto x\bullet z =  x \cdot_{a,y,b} z $, 
direct inspection of the tables shows that, when restricting $\bullet$
to the six subsets from Def.\ \ref{def:M} , we get in the respective cases:
\begin{enumerate}
\item
on $M^\true$, $\bullet$ is the
``true'' connector $(x,z) \mapsto 1_{M^\true}$,
\item
on $M^\false$, it is  the ``false'' connector
$(x,z) \mapsto 0_{M^\false}$,
\item
on $M^\rechts$, it is the ``second'' connector
$(x,z) \mapsto z$,
\item
on $M^\links$, it is the ``first'' connector
$(x,z) \mapsto x$,
\item
on $M^\oder$, it is  the ``or'' connector
$(x,z) \mapsto x \lor z$,
\item 
on $M^\und$, it is the ``and'' connector
$(x,z) \mapsto x \land z$. 
\end{enumerate} 
Now, each of the  six binary operations listed above obviously
 is associative.\footnote{Remarkably, these six
products correspond exactly to the 
\href{https://en.wikipedia.org/wiki/Semigroup_with_two_elements}{six semigroup laws on the set of two elements which are not group laws},
and also to the 
\href{https://en.wikipedia.org/wiki/Distributive_lattice}{free distributive lattice on 2 generators}.}
From this it follows that $\bullet$ is also associative:
for
$i \in I:=  \{ \true, \false, \rechts, \links, \oder, \und \}$, and $x \in \cP(M)$,
 let
$x^i := x \cap M^i$, so $x = \sqcup_{i\in I} x^i$ (disjoint union).
Then, by what we have just seen,
$(x \bullet z)^i = x^i\bullet  z^i$ is given by the connector 
corresponding to $i$, hence
\begin{align*}
(x \bullet w) \bullet z & = \sqcup_{i\in I} ( (x \bullet w) \bullet z)^i =
\sqcup_{i\in I}  (x^i \bullet w^i) \bullet z^i
\\
& =\sqcup_{i\in I}  x^i \bullet (w^i \bullet z^i) =
x \bullet (w \bullet z).
\end{align*}
Moreover, each of the six connectors has the ``weak band property'',
as is immediately checked, and hence by the same argument,
$\bullet$ also is a weak band. 
Notice also that the opposite product of $\bullet$ is gotten by exchanging
$a$ and $b$ (which corresponds to exchanging 
the indices ``right'' and ``left'' and keeping the other four), which
corresponds to the opposite vertex in the ``hexad''.

\msk
Next, consider  the product $L_{(x,a,y)}$.  We 
present the information contained in the precding tables, 
structured in a different way:
fix the Boolean values for $(\omega \in x,\omega\in a,\omega \in y)$ and 
consider those for $\omega \in L(x,a,y,b,z)$ as a function of those of
$(\omega \in b,\omega \in z)$. 

\msk 
1.  When $\omega$ is in at most one of the sets
$x,a,y$, then we get the ``and''-connector:

\begin{multicols}{2}

\begin{tabular}{l | l | l | l |  l     |   l } 
$x$   &  $a$   &  $y$ &  $b$  & $z$ &  $L=U$
  \cr
\hline
$0$ & $0$ & $0$ & $0$ &$0$ &
 $\quad 0$  
\\
$0$ & $0$ & $0$ & $0$ &$1$ &
 $\quad 0$  
\cr 
$0$ & $0$ & $0$ & $1$ &$0$ &
 $\quad 0$   
\cr 
$0$ & $0$ & $0$ & $1$ &$1$ &
 $\quad 1$   
\cr 
 \end{tabular}

\columnbreak

\begin{tabular}{l | l | l | l |  l     |   l } 
$x$   &  $a$   &  $y$ &  $b$  & $z$ &  $L=U$
  \cr
\hline
$0$ & $0$ & $1$ & $0$ &$0$ &
 $\quad 0$  
\\
$0$ & $0$ & $1$ & $0$ &$1$ &
 $\quad 0$  
\cr 
$0$ & $0$ & $1$ & $1$ &$0$ &
 $\quad 0$   
\cr 
$0$ & $0$ & $1$ & $1$ &$1$ &
 $\quad 1$   
\cr 
 \end{tabular}
\end{multicols}

\ssk

\begin{multicols}{2}

\begin{tabular}{l | l | l | l |  l     |   l }
$x$   &  $a$   &  $y$ &  $b$  & $z$ &  $L=U$
  \cr
\hline
$0$ & $1$ & $0$ & $0$ &$0$ &
 $\quad 0$  
\\
$0$ & $1$ & $0$ & $0$ &$1$ &
 $\quad 0$  
\cr 
$0$ & $1$ & $0$ & $1$ &$0$ &
 $\quad 0$   
\cr 
$0$ & $1$ & $0$ & $1$ &$1$ &
 $\quad 1$   
\cr 
 \end{tabular}

\columnbreak

\begin{tabular}{l | l | l | l |  l     |   l }
$x$   &  $a$   &  $y$ &  $b$  & $z$ &  $L=U$
  \cr
\hline
$1$ & $0$ & $0$ & $0$ &$0$ &
 $\quad 0$  
\\
$1$ & $0$ & $0$ & $0$ &$1$ &
 $\quad 0$  
\cr 
$1$ & $0$ & $0$ & $1$ &$0$ &
 $\quad 0$   
\cr 
$1$ & $0$ & $0$ & $1$ &$1$ &
 $\quad 1$   
\cr 
 \end{tabular}
\end{multicols}
\ssk

\nin 2.   When $\omega \notin x$ and
$\omega \in a \land y$, then we get the
``left''-connector:

\begin{center}
\begin{tabular}{l | l | l | l |  l     |   l }
$x$   &  $a$   &  $y$ &  $b$  & $z$ &  $L=U$
  \cr
\hline
$0$ & $1$ & $1$ & $0$ &$0$ &
 $\quad 0$  
\\
$0$ & $1$ & $1$ & $0$ &$1$ &
 $\quad 0$  
\cr 
$0$ & $1$ & $1$ & $1$ &$0$ &
 $\quad 1$   
\cr 
$0$ & $1$ & $1$ & $1$ &$1$ &
 $\quad 1$   
\cr 
 \end{tabular}
\end{center}
\ssk

\nin 3.  When $\omega \notin a$ and
$\omega \in x \land y$, then we get the
``right''-connector:

\begin{center}
\begin{tabular}{l | l | l | l |  l     |   l }
$x$   &  $a$   &  $y$ &  $b$  & $z$ &  $L=U$
  \cr
\hline
$1$ & $0$ & $1$ & $0$ &$0$ &
 $\quad 0$  
\\
$1$ & $0$ & $1$ & $0$ &$1$ &
 $\quad 1$  
\cr 
$1$ & $0$ & $1$ & $1$ &$0$ &
 $\quad 0$   
\cr 
$1$ & $0$ & $1$ & $1$ &$1$ &
 $\quad 1$   
\cr 
 \end{tabular}
\end{center}

\nin 4.  When $\omega \in x \land a$, then we get the  ``true'' connector:

\begin{multicols}{2}
\begin{tabular}{l | l | l | l |  l     |   l }
$x$   &  $a$   &  $y$ &  $b$  & $z$ &  $L=U$
  \cr
\hline
$1$ & $1$ & $0$ & $0$ &$0$ &
 $\quad 1$  
\\
$1$ & $1$ & $0$ & $0$ &$1$ &
 $\quad 1$  
\cr 
$1$ & $1$ & $0$ & $1$ &$0$ &
 $\quad 1$   
\cr 
$1$ & $1$ & $0$ & $1$ &$1$ &
 $\quad 1$   
\cr 
 \end{tabular}

\columnbreak

\begin{tabular}{l | l | l | l |  l     |   l } 
$x$   &  $a$   &  $y$ &  $b$  & $z$ &  $L=U$
  \cr
\hline
$1$ & $1$ & $1$ & $0$ &$0$ &
 $\quad 1$  
\\
$1$ & $1$ & $1$ & $0$ &$1$ &
 $\quad 1$  
\cr 
$1$ & $1$ & $1$ & $1$ &$0$ &
 $\quad 1$   
\cr 
$1$ & $1$ & $1$ & $1$ &$1$ &
 $\quad 1$   
\cr 
 \end{tabular}
\end{multicols}
\msk

\nin
In all four cases, we get a semigroup law satisfying the 
``weak band'' property, and decomposing $M$ into a
disjoint union of sets where these properties hold, it follows as above
that the product $L_{(x,a,y)}$ has these properties on $M$. 
The product $L_{(a,x,y)}$ is the opposite product of $L_{(x,a,y)}$,
 and hence also has
the same properties -- note that exchange of $(a,x)$ just exchanges
cases 2. and 3.  above. 
Next, fix the Boolean values for $\omega \in x,y,b$ and list
those of $\omega \in L(x,a,y,b,z)$ as function of 
those of $\omega \in (a,z)$.
The result is similar as above, in a ``dual'' way: 

\ssk
(1) When $\omega$ is in at most one of 
$\lnot x,\lnot b,\lnot y$, then we get the ``or''-connector.

\ssk
(2) 
When $\omega \in x$ and
$\omega \in (\lnot b \land \lnot y)$, then we get the
``left''-connector.

\ssk
(3)
When $\omega \in b$ and
$\omega \in (\lnot x \land \lnot y)$, then we get the
``right''-connector.

\ssk
(4)
When $\omega \in (\lnot x \land \lnot b)$, then we get
the ``false'' connector.

\ssk
\nin 
As above, this implies the statements of Theorem \ref{th:associativity} for
$L_{(x,y,b)}$ and $L_{(b,y,x)}$, and finishes its proof.
\end{proof}

\begin{definition}
For each of the products  from the ``hexad'', we call its 
{\em bottom} the set corresponding to the ``true'' connector,
and we shall the use the notation $M^\true$ already used for $\bullet$.
We call its {\em top} the set corresponding to the ``false''  connector,
and we shall use the notation $M^\false$ already used for $\bullet$. 
Thus, for all $u,v \in \cX$,
$$
M^\true \leq u \cdot v \leq (M \setminus  M^\false) = \lnot M^\false .
$$
\end{definition}

\begin{theorem}\label{th:bottom-top}
Let   $\cX = \cP(M)$ be the lattice of the power set of a set $M$.
For the six products from the hexad, bottom and top are given by
\begin{enumerate}
\item
for $\bullet = L_{(a,y,b)}$, we have
$M^\true = a \land y \land b$ and $M^\false = \lnot (a \lor y \lor b)$,
\item
for  $L_{(x,a,y)}$, we have  $M^\true = x \land a$,
and $M^\false = \emptyset$,
\item
for  $L_{(x,y,b)}$, we have 
$M^\false =  (\lnot x \land \lnot b) = \lnot (x \lor b)$,
and $M^\true = \emptyset$,
\item
products belonging to opposite hexad vertices have same
bottom and top.
\end{enumerate}
\end{theorem}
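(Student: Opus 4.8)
The plan is to extract everything directly from the case analysis already carried out in the proof of Theorem~\ref{th:associativity}, where, for each product in the hexad, the restriction of the product to each piece of a suitable partition of $M$ was identified with one of the six elementary connectors. By definition, the bottom $M^\true$ is the union of those pieces on which the product restricts to the ``true'' connector $(x,z)\mapsto 1$, and the top $M^\false$ is the union of those on which it restricts to the ``false'' connector $(x,z)\mapsto 0$; so in each case I would simply read off which pieces carry these two connectors.

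For the principal product $\bullet = L_{(a,y,b)}$, this is nothing but the partition from Definition~\ref{def:M}: the ``true'' connector occurs exactly on $M^\true = a\land y\land b$, and the ``false'' connector exactly on $M^\false = \lnot a \land \lnot y \land \lnot b$, which equals $\lnot(a\lor y\lor b)$ by De~Morgan. This gives item~(1). For $L_{(x,a,y)}$, I would inspect the four-case list in the proof of Theorem~\ref{th:associativity} (where the Boolean values of $(\omega\in x,\omega\in a,\omega\in y)$ are fixed and $L$ is viewed as a function of $(\omega\in b,\omega\in z)$): the ``true'' connector arises precisely when $\omega\in x\land a$, while no case produces the ``false'' connector. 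Hence $M^\true = x\land a$ and $M^\false = \emptyset$, proving item~(2). Item~(3) follows dually from the companion four-case list for $L_{(x,y,b)}$: there the ``false'' connector arises exactly when $\omega\in \lnot x\land\lnot b = \lnot(x\lor b)$, and no case produces the ``true'' connector, so $M^\false = \lnot(x\lor b)$ and $M^\true = \emptyset$.

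For item~(4), I would use the observation, already recorded in the proof of Theorem~\ref{th:associativity}, that passing to the opposite product swaps only the ``left'' and ``right'' connectors while fixing the four connectors ``true'', ``false'', ``and'', ``or'', all of which are symmetric in their two arguments. Since bottom and top are the loci of the ``true'' and ``false'' connectors, they are therefore unchanged when one replaces a product by its opposite; as opposite vertices of the hexad carry mutually opposite products (Theorem~\ref{th:associativity}), they share the same bottom and top. One can also verify this directly from items~(1)--(3): exchanging the two outer labels of each fixed triple leaves $a\land y\land b$, $\lnot(a\lor y\lor b)$, $x\land a$, and $\lnot(x\lor b)$ invariant. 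I do not expect a genuine obstacle here, since all six local product laws were already pinned down in the previous proof; the only point requiring a moment's care is item~(4), namely recognizing that the symmetry of the ``true''/``false'' connectors under exchange of arguments is exactly what makes bottom and top invariant under the opposite operation.
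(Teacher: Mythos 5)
Your proposal is correct and follows essentially the same route as the paper, whose entire proof is ``All statements follow by direct inspection from the tables given above'': you read off the loci of the ``true'' and ``false'' connectors from the case analyses in the proof of Theorem \ref{th:associativity}, exactly as intended. Your added justification of item (4) via the symmetry of the ``true''/``false'' connectors under exchange of arguments is a harmless (and correct) elaboration of what the paper leaves implicit.
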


\begin{proof}
All statements follow by direct inspection from the tables given above.
\end{proof}

\begin{remark}
We'll see in the arithmetic case that the existence of a non-empty
bottom causes a certain {\em square periodicity} of the respective products. 
More precisely, there may also be a {\em periodicity in a single
argument}, related to the following result.
\end{remark}

\begin{theorem}\label{th:period}
Let   $\cX = \cP(M)$ be the lattice of the power set of a set $M$.
Fix a triple $(a,y,b) \in \cX^3$. Then the principal product 
 $x \bullet z= x\cdot_{a,y,b} z$
is 
\begin{enumerate}
\item
{\em $a \land y$-periodic} in $x$, i.e., for all $x$ and $z$:
$\quad (x \lor (a\land y)) \bullet  z = x \bullet  z $,
\item
{\em $b \land y$-periodic in $z$}, i.e., for all $x$ and $z$:
$\quad  x \bullet  z = x \bullet (z \lor (b \land y))$.
\end{enumerate}
\end{theorem}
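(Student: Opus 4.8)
The plan is to reduce to the power-set model $\cX=\cP(M)$, exactly as in the proof of Theorem~\ref{th:associativity}, and then to read off the dependence of the product on each variable from the expanded form~(\ref{eqn:A}),
\[
x\bullet z=(a\land x)\lor(b\land z)\lor(a\land b\land y)\lor(x\land y\land z).
\]
Here $x$ occurs only in the two joinands $a\land x$ and $x\land y\land z$, while $z$ occurs only in $b\land z$ and $x\land y\land z$; the two assertions are interchanged by the Klein involution $(x\,z)(a\,b)$, which fixes $L$ by Lemma~\ref{la:Klein}, so I would settle periodicity in one variable and transport it to the other by renaming dummy variables in the (universally quantified) identity. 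In the block language of Definition~\ref{def:M} the same remark reads: $x\bullet z$ depends on $x$ only on $M^\links\sqcup M^\oder\sqcup M^\und$, and on $z$ only on $M^\rechts\sqcup M^\oder\sqcup M^\und$, since on the remaining blocks the product is one of the constant, ``second'', or ``first'' connectors.

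The heart of the matter is a one-line absorption check. Enlarging $x$ by $b\land y$ and distributing gives
\[
a\land\bigl(x\lor(b\land y)\bigr)=(a\land x)\lor(a\land b\land y),\qquad \bigl(x\lor(b\land y)\bigr)\land y\land z=(x\land y\land z)\lor(b\land y\land z),
\]
and both new joinands disappear under $\lor$: the term $a\land b\land y$ already occurs in $x\bullet z$, and $b\land y\land z\le b\land z$. Hence $\bigl(x\lor(b\land y)\bigr)\bullet z=x\bullet z$, so $\bullet$ is periodic in $x$ with period $b\land y$; applying the involution $(x\,z)(a\,b)$ then yields periodicity in $z$ with period $a\land y$. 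The block picture confirms this at once, since $b\land y\subseteq M^\true\sqcup M^\rechts$ and $a\land y\subseteq M^\true\sqcup M^\links$ are precisely the unions that miss the $x$-, respectively $z$-dependent blocks listed above.

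The one point requiring care --- and the main obstacle --- is the bookkeeping of which fixed element is paired with which variable. The absorption forces the period of a variable to be the meet of $y$ with the \emph{opposite} fixed element: the period in $x$ is $b\land y$ and the period in $z$ is $a\land y$, which is exactly what the sample product $\bullet_{3,2,4}$ displays, with line period $\lcm(b,y)=4$ in $x$ and column period $3\mid\lcm(a,y)=6$ in $z$. Pairing $x$ with its own partner $a$ would instead propose $a\land y$ as the period in $x$; but then the increment $a\land(a\land y)=a\land y$ fails to be absorbed (take $x=z=\emptyset$, where $x\bullet z=a\land b\land y$ and $a\land y\not\le a\land b\land y$ in general), so the verification pins down the opposite pairing as the only one that works.
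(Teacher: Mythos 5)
Your proposal is correct, and it correctly detects that the statement as printed has $a$ and $b$ interchanged: the period of the principal product in $x$ is $b\land y$, and the period in $z$ is $a\land y$, exactly as your absorption computation shows and exactly as required for consistency with the definition of the line period $n_{y,b}=\lcm(y,b)$ and with Theorem~\ref{th:arithmetic_case}(2). The printed claim $(x\lor(a\land y))\bullet z=x\bullet z$ is false, and your counterexample $x=z=\emptyset$ (giving $a\land b\land y$ versus $a\land y$) is decisive. The same failure is visible in the block picture: $a\land y$ meets $M^\links=a\setminus b$, where the product is the first-projection connector and hence genuinely depends on $x$. Note that the paper's own proof inspects precisely the blocks $M^\true$ and $M^\links$ for claim (1) and asserts that the two lines of the $M^\links$ table distinguished only by the $x$-entry ``correspond to the periodicity claim''; but those two lines have \emph{different} $L$-values, so the published argument breaks down at exactly the point your last paragraph isolates. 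The correct blocks for $x$-periodicity are $M^\true$ and $M^\rechts$ (constant and second-projection connectors), which is precisely where $b\land y$ lives.

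As for method, the paper argues purely by inspection of the Boolean tables in $\cP(M)$, whereas your main argument is the two-line absorption computation from the expanded form~(\ref{eqn:A}) combined with the Klein involution $(x\,z)(a\,b)$, with the block decomposition kept only as a cross-check. Your route is preferable: it is shorter, it localizes the reason for the ``crossed'' pairing (the increment added to $x$ must be absorbed by the joinands $a\land b\land y$ and $b\land z$, which forces the period to involve $b$), and it is valid verbatim in any distributive lattice rather than only in $\cP(M)$, so the periodicity transfers to $\N_0$ without passing through any representation theorem. The theorem and its proof should be corrected accordingly (swap the two periods, or equivalently replace $M^\links$ by $M^\rechts$ in the argument for claim (1) and dually for claim (2)).
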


\begin{proof}
Claim (1) amounts so saying that 
$L'=L(x \lor (a \land y),a,y,b,z)$ and
$L=L(x,a,y,b,z)$ have same Boolean values for $\omega \in M$.
Now, looking at the tables of Boolean values 
describing $\bullet$, the cases $\omega \in a \land y$ correspond
to $M^\true$ (where $L'$ and $L$ both always have value $1$),
and $M^\links$ (containing two lines which are distinguished only
by one entry, corresponding to the periodicity claim).
Likewise for Claim (2).
\end{proof}

\nin
\textbf{Ternary products.} --
In certain contexts it is useful to switch the viewpoint from {\em binary}
to {\em ternary} products, e.g., when looking at the ``affine analog'' of
groups, which have been called {\em torsor} in \cite{BeKi1}.
Fix a pair
$(a,b) \in \cX^2$, and consider the remaining {\em three} arguments
$(x,y,z)$ as variables, i.e., we define a {\em ternary product}
\begin{equation}\label{eqn:ternary}
\cX^3 \to \cX, \quad 
(x,y,z) \mapsto (xyz)_{ab}:=L(x,a,y,b,z) = x\cdot_{a,y,b} z.
\end{equation}

\begin{theorem}
Let   $\cX$ be a bounded distributive lattice, and 
fix $(a,b) \in \cX^2$.  Then: 
\begin{enumerate}
\item
the product {\rm (\ref{eqn:ternary})} is
{\em associative} and
{\em para-associative}: 
$\forall x,x',y,z,z' \in \cX$,
\begin{align*}
((x x' y)_{ab} z'z)_{ab} & =
(x (x' y z')_{ab}z)_{ab} 
\\
&=
(x (z' y x')_{ab}z)_{ab} =
(xx' (yz'z)_{ab})_{ab}  ,
\end{align*}
\item
the ``middle'' operator $M_{x,z} ( y):= (xyz)_{ab}$ is idempotent;
more precisely,
$$
M_{x,z}\circ M_{x,z} = M_{x,z} = M_{x,z}\circ M_{z,x} .
$$
\end{enumerate}
\end{theorem}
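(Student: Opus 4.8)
The plan is to reproduce, essentially verbatim, the strategy of the proof of Theorem \ref{th:associativity}: associativity, para-associativity and the middle-idempotency relations are all \emph{universal} algebraic identities satisfied (or not) by the ternary map $(x,y,z)\mapsto (xyz)_{ab}=L(x,a,y,b,z)$, so by the representation theorem for bounded distributive lattices it suffices to verify them in the case $\cX=\cP(M)$, and there to decompose the product into ``atoms''. The single new feature is that $y$ is now a \emph{variable}; hence the relevant partition of $M$ is the coarser one determined by $(a,b)$ alone, into the four sets $a\land b$, $a\setminus b$, $b\setminus a$ and $\lnot(a\lor b)$, corresponding to the Boolean values $(1,1),(1,0),(0,1),(0,0)$ of $(\omega\in a,\omega\in b)$. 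Concretely, these arise by merging the six sets of Definition \ref{def:M}: $a\land b=M^\true\sqcup M^\oder$, $\lnot(a\lor b)=M^\false\sqcup M^\und$, $a\setminus b=M^\links$, $b\setminus a=M^\rechts$.

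First I would read off the ternary connector on each of the four pieces. The cleanest route is to substitute the local values of $a,b$ into the distributive formula (\ref{eqn:A}), which I rewrite as $L=(a\land x)\lor(b\land z)\lor\bigl(y\land((a\land b)\lor(x\land z))\bigr)$, and simplify by absorption. On $a\land b$ (i.e.\ $a=b=1$) this gives $x\lor y\lor z$; on $\lnot(a\lor b)$ (i.e.\ $a=b=0$) it gives $x\land y\land z$; on $a\setminus b$ (i.e.\ $a=1,b=0$) it gives $x\lor(y\land x\land z)=x$; and on $b\setminus a$ it gives $z\lor(y\land x\land z)=z$. Thus, restricted to the four pieces, the ternary product is respectively the \emph{ternary join}, the \emph{ternary meet}, the \emph{first projection} $(x,y,z)\mapsto x$, and the \emph{third projection} $(x,y,z)\mapsto z$. (Equivalently, these four ternary connectors can be extracted from the $32$-case tables in the proof of Theorem \ref{th:associativity} by regrouping the rows according to $(a,b)$ rather than $(a,y,b)$.)

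Next I would check that each of these four ternary operations satisfies every identity in the statement; this is immediate. For the two projections, all four bracketings $((xx'y)z'z)$, $(x(x'yz')z)$, $(x(z'yx')z)$, $(xx'(yz'z))$ collapse to $x$, respectively to $z$, and reversing the middle argument changes nothing, so para-associativity, associativity and the $M_{x,z}$ relations are trivial. For the ternary join, each of the four bracketings equals the flat join $x\lor x'\lor y\lor z'\lor z$ by commutativity, associativity and idempotency of $\lor$, while $M_{x,z}\circ M_{x,z}(y)=x\lor(x\lor y\lor z)\lor z=x\lor y\lor z=M_{x,z}(y)$ and $M_{x,z}\circ M_{z,x}(y)=x\lor(z\lor y\lor x)\lor z=M_{x,z}(y)$; the ternary meet is handled dually. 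Finally, exactly as in the proof of Theorem \ref{th:associativity}, writing any $u\in\cP(M)$ as the disjoint union of its traces on the four pieces and using that each identity holds componentwise, I conclude that all identities hold on $\cP(M)$, hence on the sublattice $\cX$, hence on every bounded distributive lattice.

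The argument has no genuine obstacle; the only point demanding care is the reduction step, namely realizing that once $y$ is a variable the correct partition is the $4$-fold one indexed by $(a,b)$, so that the two ``and/or'' strata $M^\und,M^\oder$ (which in the binary analysis had been split off by the fixed value of $y$) now fuse with $M^\false$ and $M^\true$ into $\lnot(a\lor b)$ and $a\land b$. Once the four ternary connectors are identified, everything else is a one-line verification. I would also note that associativity of the binary principal product (\ref{eqn:principal}) is recovered by specializing the para-associative chain with the middle slot frozen, and that the middle-idempotency of $M_{x,z}$ is precisely the ternary shadow of the weak-band property established in Theorem \ref{th:associativity}.
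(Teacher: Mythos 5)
Your proof is correct and follows essentially the same route as the paper's: reduce to $\cP(M)$ via the representation theorem, partition $M$ into the four strata determined by $(a,b)$, identify the ternary connector on each stratum (ternary join, ternary meet, first and third projection), and verify all the identities atomwise. In fact your grouping $a\land b=M^\true\sqcup M^\oder$ and $\lnot(a\lor b)=M^\false\sqcup M^\und$, with the join on the former and the meet on the latter, corrects a transposition in the paper's own write-up, which lists the unions as $M^\true\sqcup M^\und$ and $M^\false\sqcup M^\oder$ with the meet and join interchanged (harmless for the conclusion, since all four elementary connectors satisfy the identities, but your version is the accurate one).
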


\begin{proof} (1)
As for the binary products, 
it suffices to prove the theorem for $\cX = \cP(M)$, a 
power set lattice. 
With respect to the fixed pair $(a,b)$, we
decompose $M$ into the disjoint union of four subsets, using
notation from Def.\ \ref{def:M},
$$
M = \bigl(M^\true \sqcup M^\und \bigr) \sqcup \bigl(M^\false \sqcup M^\oder \bigr)
\sqcup M^\rechts \sqcup M^\links .
$$
Then, as is seen directly from the tables describing the Boolean values
of $\bullet$, 
the ternary map
$(x,y,z) \mapsto (xyz)_{ab}:= L(x,a,y,b,z)$ coincides with

\ssk
$(x,y,z) \mapsto x \land y \land z$ on the first set $M^\true \sqcup M^\und$,

$(x,y,z) \mapsto x \lor y \lor z$ on the second set $M^\false \sqcup M^\oder$,

$(x,y,z) \mapsto z$ on the third set $M^\rechts$ ,

$(x,y,z) \mapsto x$ on the fourth set $M^\links$.

\ssk
\nin
Thus the ternary product $(xyz)_{ab}$ will inherit properties that
are in common for these four elementary ones.
Now, it is immediately checked that all four ``elementary'' ternary products
are associative and para-associative, whence (1).
Concerning (2),  it is clear that in all four cases 
 middle multiplication operators are idempotent.
Eg., for the right product $(xyz)=z$, we get 
$(x(xyz)z) = (xzz) = z = (xyz)$ and
$(x(zyx)z)= (xxz)=z = (xyz)$. 
 \end{proof}

\begin{remark}
There is no  ``hexad'' of ternary products:
the preceding result does not carry over to the other
``parastrophe''  ternary products.  
\end{remark}

\section{The arithmetic case}\label{sec:arithmetic}

\subsection{Comparison with the Boolean case, and general results}
Now consider the possibly most interesting distributive lattice,
the lattice $\cX = \N_0$ of natural numbers with
$\land = \lcm$ and $\lor = \gcd$.
Via the {\em $p$-adic valuation} $n \mapsto v_p(n)$, for each
prime number $p$, transforming the lattice operations to min and
max,
\begin{equation}
\begin{matrix}
v_p(\lcm(x,y)) = \max (v_p(x),v_p(y)),\\
v_p(\gcd(x,y)) = \min (v_p(x),v_p(y)).
\end{matrix}
\end{equation}
it is closely related to the {\em totally ordered case}
(Theorem \ref{th:order}).
This imbedding can also be formulated as an imbedding into
a power set $\cP(\PP)$ (Remark \ref{rk:second_proof}), 
realizing the arithmetic case as a sublattice of a Boolean lattice. 
However, 
 the arithmetic lattice  seems to be ``quite far away'' from
the Boolean case, and has a rather particular flavor.

\begin{remark}\label{rk:second_proof}
The imbedding of the lattice
$(\N_0,\land,\lor)$ into
a power set lattice can be described  directly as follows: let 
\begin{equation}
\bP := \{ p^k \mid \, p \mbox{ prime, or } p=1, \mbox{ and } k \in \N \} \subset \N,
\end{equation}
the set of
prime powers.
 Assembling  the evaluation maps into a single object, let
\begin{equation}
\lambda : \N_0 \to \cP(\bP), \quad
0 \mapsto \bP, \,
n \mapsto 
\lambda(n) := \{ m \in \bP \mid \,  \, m \big\vert n \} ,
\end{equation}
sending $n$ to
the set of prime power divisors of $n$. Note that, with our choice of
notation, this version of $\lambda$ is {\em antitone} (this is inevitable if 
we want to denote both intersection of ideals in $\Z$ and of subsets of $\bP$ by
the same symbol $\land$):
\begin{align*}
\lambda (n \land m) =
\lambda (\lcm (n ,m)) &= \lambda(n) \cup \lambda(m) =
\lambda(n) \lor  \lambda(m) , 
\\
\lambda(n \lor m) =
\lambda (\gcd( n , m)) &= \lambda(n) \cap \lambda(m) = 
\lambda(n) \land \lambda(m).
\end{align*}
Using this imbedding, we can complete $\N_0$ to a Boolean algebra, 
adding the missing complement map, simply by taking the sublattice 
$\cX(\N_0) \subset \cP(\bP)$ generated by $\lambda(\N_0)$ and
all complement sets $\overline n := \lnot (\lambda(n))$ for $n \in \N_0$.
It is quite easy to see that $\cX(\N_0) = \cP^{\rm fin}(\bP) \sqcup \cP^{\rm cof}(\bP)$
is precisely the 
\href{https://en.wikipedia.org/wiki/Cofiniteness#Boolean_algebras}{\em finite-cofinite algebra of $\bP$}, 
i.e., the union of the collection of all {\em finite} subsets of $\bP$  and all
{\em cofinite} subsets of $\bP$. Being a Boolean algebra, it is stable under 
$U$ and $L$. 
\end{remark}


\begin{definition}
 For a fixed triple
 $(a,y,b) \in \N^3$, we call 

\ssk
$n = n_{y,b}:=  \lcm (y,b)$ ``line period'',

$m := m_{y,a} := \lcm(y,a)$ ``column period'',

$N := N_{a,y,b} := \lcm(a,y,b)$ ``square period'' (bottom),

$K := K_{a,y,b}: = \gcd(a,y,b)$ ``base frequency'' (top).
\end{definition}

\nin
We shall mostly be interested in the ``principal products''  $\bullet_{a,y,b}$.
When $N = 0$, then its description follows
directly from Theorem \ref{th:diagonals}:

\begin{theorem}
Let $(a,y,b) \in \N_0^3$ such that $N = 0$. Then,
\begin{enumerate}
\item
if $y=0$, we have $x \bullet_{a,0,b} z = L_5(x,a,y,b,z) = (b \land z) \lor (a \land x) $,
\item
if $a=0$, we have $x \bullet_{0,y,b} z = L_2(x,a,y,b,z) = z \land (b \lor (x \land y)) $,
\item
if $b=0$, we have $x \bullet_{a,y,0} z = L_3 (x,a,y,b,z) = x \land (a \lor (z\land y))$.
\end{enumerate}
In particular, we have 
\begin{enumerate}
\item
$ x \bullet_{1,0,1} z  = x \lor z$,
\item
$x \bullet_{0,1,0} z  = x \land  z $,
\item
$x \bullet_{1,0,0} z  = x = x \bullet_{1,1,0} z = z \bullet_{0,1,1} x = z \bullet_{0,0,1} x$ .
\end{enumerate}
\end{theorem}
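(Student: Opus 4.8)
The plan is to deduce the entire statement from Theorem \ref{th:diagonals}, Item (4), once the special role of the numbers $0$ and $1$ in the arithmetic lattice is made explicit. First I would record that in $(\N_0, \lcm, \gcd)$ the number $0$ is the \emph{minimal} element and $1$ the \emph{maximal} one: from $\lcm(x,0)=0$ and $\gcd(x,0)=x$ we read off $x \land 0 = 0$ and $x \lor 0 = x$, while from $\lcm(x,1)=x$ and $\gcd(x,1)=1$ we read off $x \land 1 = x$ and $x \lor 1 = 1$. (This order reversal, with the arithmetically smallest number sitting at the bottom of the lattice and $1$ at the top, is the one genuinely confusing point, and the only place a careless reader could go wrong.) I would then observe that, in $\N_0$, the hypothesis $N = \lcm(a,y,b) = 0$ holds if and only if at least one of $a, y, b$ equals $0$, since an $\lcm$ of strictly positive integers is strictly positive; hence the three cases $y=0$, $a=0$, $b=0$ are exhaustive. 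They may overlap, but the formulas agree on the overlaps, as one checks in passing.

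With this in hand, cases (1) and (2) are immediate. Since $0$ is the minimal element, the hypotheses $y=0$ and $a=0$ are exactly the hypotheses of the two relevant lines of Theorem \ref{th:diagonals}, Item (4); using $L = U = \bullet$ from (\ref{eqn:principal}) these give $x \bullet_{a,0,b} z = L(x,a,0,b,z) = L_5 = (b \land z) \lor (a \land x)$ and $x \bullet_{0,y,b} z = L(x,0,y,b,z) = L_2 = z \land (b \lor (x \land y))$. Case (3), where $b=0$, is not listed there, but follows from case (2) by the Klein four-group invariance of Lemma \ref{la:Klein}: the double transposition swapping $(x,z)$ with $(a,b)$ yields $L(x,a,y,0,z) = L(z,0,y,a,x)$, and the right-hand side is a case-(2) expression equal to $x \land (a \lor (z \land y)) = L_3(x,a,y,b,z)$.

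Finally, the ``in particular'' consequences are obtained by substituting the specified values into the three formulas just established and simplifying with the extreme-element laws $1 \land u = u$, $0 \land u = 0$, $0 \lor u = u$, $1 \lor u = 1$. Thus $x \bullet_{1,0,1} z$ is a case-(1) expression equal to $(1 \land z) \lor (1 \land x) = z \lor x$; $x \bullet_{0,1,0} z$ is a case-(2) expression equal to $z \land (0 \lor (x \land 1)) = z \land x$; and each of the four products $x \bullet_{1,0,0} z$, $x \bullet_{1,1,0} z$, $z \bullet_{0,1,1} x$, $z \bullet_{0,0,1} x$ collapses to $x$ after substitution into the appropriate formula and cancellation of a factor $0$ or $1$. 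There is no serious obstacle beyond this bookkeeping; the whole theorem is a corollary of results already proved, the only subtlety being the correct identification of $0$ and $1$ with the extreme elements of the lattice.
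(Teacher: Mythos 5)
Your proposal is correct and follows exactly the route the paper intends: the paper gives no separate proof, merely noting that the $N=0$ case ``follows directly from Theorem \ref{th:diagonals}'', and your argument is a faithful elaboration of that, including the two details left implicit there (that the integer $0$ is the lattice-minimal and $1$ the lattice-maximal element of $(\N_0,\lcm,\gcd)$, and that the case $b=0$ is obtained from the case $a=0$ via the Klein four-group invariance of Lemma \ref{la:Klein}). The substitutions for the ``in particular'' items also check out.
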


\begin{definition}
Let $(a,y,b) \in \N^3$ (so $N \not=0$ and $K \not= 0$),
let $[1,KN]$ be the set of divisors of $KN$. For $x \in [1,KN]$,  we call
 $x' := \frac{KN}{x}$ its {\em conjugate divisor}, and define the {\em conjugate product}
$$
\N_0^2 \to \N_0, \quad (x,z) \mapsto x \bullet_{b',y',a'} z .
$$
\end{definition}

\begin{theorem}\label{th:arithmetic_case}
Let $(a,y,b) \in \N_0^3$ such that $N \not= 0$.
Then:
\begin{enumerate}
\item
For all $(x,z) \in \N_0^2$, 
 $L(x,a,y,b,z)$ belongs to the set of divisors of
$N$, and is a multiple of $K$. In particular,
$x \bullet_{a,y,b} z$  takes only a finite number of values.
\item 
The product $\bullet_{a,y,b}$ is {\em $(n,m)$-periodic} in the following sense:

\ssk
$
x \equiv x'  {\rm mod} (n)  \quad \Rightarrow \quad
x \bullet_{a,y,b} z= x' \bullet_{a,y,b} z,
$

$
 z \equiv z'  {\rm mod}(m) \quad \Rightarrow \quad
 x \bullet_{a,y,b} z = x \bullet_{a,y,b} z' .
$

\ssk
\nin
It follows that
the product $\bullet_{a,y,b}$ passes to the quotient defining a product on
$\Z / N\Z $, or even on $K\Z / N \Z$.
\item
The ``conjugation map''  
$$
\gamma : [1,KN] \to [1,KN], \quad x \mapsto x' = \frac{KN}{x} 
$$
is an isomorphism from the semigroup $[1,KN]$ with product
$\bullet_{a,y,b}$ onto the semigrop $[1,KN]$ with conjugate product
$\bullet_{b',y',b'}$. 
By restriction, the same holds for the subsemigroup
$[K,N]=\{ d \mid \, K\vert d, \,\,  d \vert N \}$.
\end{enumerate}
\end{theorem}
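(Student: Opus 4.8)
The plan is to carry out all three parts using the explicit arithmetic form of the principal product provided by Equation (\ref{eqn:A}): in the lattice $\N_0$,
\[
x \bullet_{a,y,b} z = L(x,a,y,b,z) = \gcd\bigl(\lcm(a,x),\,\lcm(b,z),\,\lcm(a,b,y),\,\lcm(x,y,z)\bigr),
\]
and then, for each prime $p$, to pass to $p$-adic valuations, abbreviating $\alpha=v_p(a)$, $\beta=v_p(b)$, $\eta=v_p(y)$, $\xi=v_p(x)$, $\zeta=v_p(z)$ in $\N_0\cup\{\infty\}$ (with $v_p(0)=\infty$), so that
\[
v_p(L)=\min\bigl(\max(\alpha,\xi),\,\max(\beta,\zeta),\,\max(\alpha,\beta,\eta),\,\max(\xi,\eta,\zeta)\bigr).
\]
Since $N\neq 0$ forces $a,y,b\in\N$, Parts (1) and (2) become prime-by-prime $\min$/$\max$ computations, while Part (3) is deduced from the antitone formula of Lemma \ref{la:anti} together with the identity $U=L$.

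For Part (1), I would observe that $\lcm(a,b,y)=N$ is literally one of the four arguments of the $\gcd$ defining $L$, whence $L\mid N$; and that $K=\gcd(a,y,b)$ divides each of the four arguments (as $K\mid a$, $K\mid b$, $K\mid y$, and $K\mid N$), whence $K\mid L$. Thus every value of $\bullet_{a,y,b}$ lies in the finite set $[K,N]$ of divisors of $N$ that are multiples of $K$, which gives the finiteness claim at once.

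For Part (2) the heart is a \emph{stabilization lemma}: for the column variable, $v_p(L)$ is independent of $\zeta$ as soon as $\zeta\geq v_p(m)=\max(\alpha,\eta)$, since in that range both $\max(\beta,\zeta)$ and $\max(\xi,\eta,\zeta)$ dominate $\min(\max(\alpha,\xi),\max(\alpha,\beta,\eta))$ and hence drop out of the outer $\min$, leaving $v_p(L)=\min(\max(\alpha,\xi),\max(\alpha,\beta,\eta))$. The delicate point, which I expect to be the main obstacle, is to bridge this \emph{multiplicative} (valuation) stabilization with the \emph{additive} congruence $z\equiv z'\pmod{m}$ that appears in the statement, as the two are a priori unrelated. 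The bridge is the elementary remark that $m\mid(z-z')$ forces, for each prime $p$ with $\mu_p:=v_p(m)$, either $v_p(z)=v_p(z')$ (when both are $<\mu_p$) or $v_p(z),v_p(z')\geq\mu_p$; in the first case $\zeta=\zeta'$ trivially, and in the second the stabilization lemma gives equality. Hence $v_p(L(\ldots,z))=v_p(L(\ldots,z'))$ for every $p$, i.e.\ $x\bullet z=x\bullet z'$. Periodicity in $x$ modulo $n=\lcm(y,b)$ then follows from the Klein symmetry $L(x,a,y,b,z)=L(z,b,y,a,x)$ (Lemma \ref{la:Klein}), which interchanges the roles of $(x,a)$ and $(z,b)$ and so sends the column threshold $\lcm(a,y)$ to $\lcm(b,y)=n$. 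Finally, since $n\mid N$ and $m\mid N$, the value $N$ is a period in both variables, so $\bullet_{a,y,b}$ descends to $\Z/N\Z$; as all values are multiples of $K$, the restriction to multiples of $K$ is closed and yields a product on $K\Z/N\Z$.

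For Part (3), I would identify the conjugation $\gamma(d)=KN/d$ as the anti-automorphism of the distributive sublattice $D(KN)\subset\N_0$ of divisors of $KN$, exactly as in Example \ref{ex:anti-auto}: $\gamma(\gcd(d,e))=\lcm(\gamma d,\gamma e)$ and $\gamma(\lcm(d,e))=\gcd(\gamma d,\gamma e)$, so that $\gamma$ is antitone in the precise sense of Lemma \ref{la:anti}. Because $a,y,b\mid N\mid KN$ and $x,z\in[1,KN]$, all five arguments lie in $D(KN)$, so Lemma \ref{la:anti} yields $\gamma L(x,a,y,b,z)=U(\gamma x,\gamma b,\gamma y,\gamma a,\gamma z)$; invoking $U=L$ (Theorem \ref{th:distrib}, valid since $D(KN)$ is distributive) rewrites the right-hand side as $L(x',b',y',a',z')=x'\bullet_{b',y',a'}z'$, which is exactly the homomorphism property $\gamma(x\bullet_{a,y,b}z)=\gamma(x)\bullet_{b',y',a'}\gamma(z)$. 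As $\gamma$ is an involutive bijection it is an isomorphism. Finally, $\gamma$ carries the interval $[K,N]$ onto itself (if $K\mid d\mid N$ then $K\mid KN/d\mid N$), and the conjugate product has square period $\lcm(a',b',y')=KN/\gcd(a,b,y)=N$ and top $\gcd(a',b',y')=KN/\lcm(a,b,y)=K$, so by Part (1) it preserves $[K,N]$; the isomorphism therefore restricts to the subsemigroup $[K,N]$.
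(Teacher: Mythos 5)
Your proof is correct. Part (3) follows the paper's own route exactly (the conjugation is an antitone involution of the divisor lattice of $KN$, so Lemma \ref{la:anti} together with $U=L$ gives the homomorphism property; your extra checks that $[K,N]$ is $\gamma$-stable and closed under both products are harmless, and your product $\bullet_{b',y',a'}$ is the one intended -- the $\bullet_{b',y',b'}$ in item (3) of the statement is a typo). For Parts (1) and (2), however, you take a genuinely different and more self-contained route: the paper disposes of them in one line as a ``reformulation'' of Theorems \ref{th:associativity}, \ref{th:bottom-top} and \ref{th:period}, i.e.\ it relies on the Boolean imbedding $\lambda:\N_0\to\cP(\bP)$ and the decomposition of the product into the six connectors, whereas you compute directly with the arithmetic identity of Corollary \ref{cor:cases}(1) and $p$-adic valuations. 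What your version buys is precisely the one genuinely arithmetic step that the paper's citation compresses away: the passage from a lattice-theoretic periodicity (absorbing $\lcm(a,y)$, resp.\ $\lcm(b,y)$, into an argument) to the additive congruences $x\equiv x'\ ({\rm mod}\ n)$ and $z\equiv z'\ ({\rm mod}\ m)$. Your stabilization lemma ($v_p(L)$ is constant in $\zeta=v_p(z)$ once $\zeta\geq\max(v_p(a),v_p(y))$), combined with the remark that $m\mid z-z'$ forces, prime by prime, either $v_p(z)=v_p(z')$ or both valuations at least $v_p(m)$, is exactly the missing bridge; it also sidesteps a delicate piece of bookkeeping, namely that the imbedding $\lambda$ is antitone and hence interchanges the roles of $a$ and $b$ when Theorem \ref{th:period} is transported from $\cP(M)$ to $\N_0$ (read naively, that theorem attaches $a\land y$ to the $x$-variable, while the line period of the present theorem is $n=\lcm(y,b)$; your direct computation confirms that the correct thresholds are $\lcm(b,y)$ for $x$ and $\lcm(a,y)$ for $z$, in agreement with the tables such as the one for $\bullet_{3,2,4}$). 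Your Part (1) -- observing that $N=\lcm(a,b,y)$ is literally one of the four arguments of the defining $\gcd$ and that $K$ divides each of them -- is likewise a clean direct substitute for the appeal to Theorem \ref{th:bottom-top}.
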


\begin{proof}
(1) and (2) are a reformulation of parts of
Theorems \ref{th:associativity}, \ref{th:bottom-top}, and \ref{th:period}.

(3) Clearly, $\gamma$ is a lattice involution (antiautomorphism of order $2$) of the
lattice $[1,KN]$ (cf.\  Example \ref{ex:anti-auto}).
The claim now follows from Lemma \ref{la:anti}. 
\end{proof}

\begin{remark}
The isomorphism from item (3) does not directly extend to an isomorphism
$(\N_0, \bullet_{a,y,b})\to (\N_0,\bullet_{a',y',b'})$.
Rather than ``isomorphic'', these products are ``isotopic'', in the sense that
when imbedding $\N_0$ into a Boolean algebra (Remark \ref{rk:second_proof}), 
then both products are the ``same'', but evaluated on ``opposite'' copies of $\N_0$.
\end{remark}

\subsection{Examples of tables}

\begin{remark}\label{rk:Sage}
The reader may enjoy to do
 some numerical tests herself. 
 Here is the program which we have used for computing $L$ and $U$
 in the following tables, 
along with $L_i,U_i$, for $i=1,\ldots,4$,  by using
\href{http://www.sagemath.org}{\tt SageMath}, along with a random example:

\msk
{\tt
def Bounds(x,a,y,b,z):

    L=[lcm(b,gcd(z,lcm(a,y))), lcm(z,gcd(b,lcm(x,y))), 
    
    
   $\quad $  lcm(x,gcd(a,lcm(z,y))),
       lcm(a,gcd(x,lcm(b,y)))]  ; 
       
    U=[gcd(a,lcm(z,gcd(y,b))), gcd(x,lcm(b,gcd(z,y))),
    
    
    $\quad$ gcd(z,lcm(a,gcd(x,y))),
        gcd(b,lcm(x,gcd(a,y)))] ;
        
  print(L) ;  print(gcd(L)) ; 
  
  print(U) ; print(lcm(U)) 
 }

\begin{center}
\begin{tabular}{l  l     l   l      l        l    l     l }  
{\tt Bounds(425, 204, 1000, 200, 402)}
& & &
{\tt Bounds(425, 200, 1000,
204, 402)}
\\
{\tt [600, 40200, 5100, 5100] 
300}
&  & &
{\tt [204, 13668, 3400, 3400] 
68}
\\
{\tt [12, 25, 6, 100] 
300}
& & &
{\tt [4, 17, 2, 68] 
68}
\\
\end{tabular}
\end{center}
 \end{remark}

Now let us write up some ``multiplication tables'' for fixed $(a,y,b)$.
By Theorem \ref{th:diagonals},  the left upper corner
of all of the following multiplication tables (first column: values $x$, first line:
values z) 
is given by
\begin{align*}
0 \bullet_{a,y,b} 0 & = a \land y \land b = N  \mbox{ (the square period)}, 
\\
0 \bullet_{a,y,b} 1 & = b,  
\\
1 \bullet_{a,y,b} 0 & = a,
\\
1 \bullet_{a,y,b} 1 & = a \lor y \lor b = K  \mbox{ (the base frequency).} 
\end{align*} 
Also by Theorem \ref{th:diagonals},
when $a=b$, then $a$ always is an {\em absorbing element} (taking the role
of the zero element in usual quotient rings): $\forall x \in \N$,
$$
a\bullet_{a,y,a} x = L(a,a,y,a,x) = a = x \bullet_{a,y,a} a .
$$
In particular,  when $a = y = b$, then 
$x \bullet_{a,a,a} z= a$.

\begin{example}
The multiplication table for $x \bullet_{1y1} z = x \lor y \lor z$ is
$y$-periodic; if $y$ is a prime number, it takes two values; 
e.g., for  $y=2$:
\begin{center}
\begin{tabular}{l | l      l   l      l        l    l     l }  
$x\bullet_{1,2,1} z$ & 0 & 1 & 2 & 3 & 4 & 5 \\
\hline
0 & 2 & 1 & 2 & 1 & 2 & 1  \\
1 & 1 & 1 & 1 & 1 & 1 & 1 \\
2 & 2 & 1 & 2 & 1 & 2 & 1
\end{tabular}
\end{center}
When $y = 8$, the extracted table for the divisors of $N=8$ is a ``minimum-law'':
\begin{center}
\begin{tabular}{l | l      l   l      l        l    l     l }  
$x\bullet_{1,8,1} z$ & 1 & 2 & 4 & 8  \\
\hline
1 &  1 & 1 & 1 & 1  \\
2 & 1 & 2 & 2 & 2  \\
4 & 1 & 2 & 4 & 4 \\
8 & 1 & 2 & 4 & 8 
\end{tabular}
\end{center}
\end{example}

\begin{example}[Prime power cases]
These are the cases where
$p$ is a prime number and $(a,y,b)=(p^k,p^\ell,p^m)$ 
with $k,\ell,m \in \N_0$. For instance: 
\begin{center}
\begin{tabular}{l | l      l   l      l        l    l     l   l    l     l }  
$x\bullet_{8,4,2} z$ &0 & 1 & 2 & 3 & 4 & 5 & 6  & 7 & 8   \\
\hline
0 & 8 & 2 & 2 & 2 & 4 & 2 & 2 & 2 & 8 \\
1 & 8 & 2 & 2 & 2 & 4 & 2 & 2 & 2 & 8 \\
2 & 8 & 2 & 2 & 2 & 4 & 2 & 2 & 2 & 8 \\
\end{tabular}
\end{center}
One observes an effective line period $1$, instead of the predicted $n=4$.
Indeed, 
it follows from Theorem \ref{th:order} that
$x\cdot_{8,4,2} z = 2^{\max(1,\min(v_p(z),3))}$, which does not depend
on $x$.  In the prime power case,
$x \cdot_{a,y,b} z$ is solely determined by the $p$-adic valuation
$v_p(x), v_p(z)$ of $x$ and $z$ and the $L$- and $U$-functions 
for a totally ordered set (Theorem \ref{th:order}).
The cases $k \geq \ell \geq m$ and $k \leq \ell \leq m$ behave like the
preceding example. 
The other cases are more complicated.  For instance, here is the table
for the divisors of $16$ with $(a,y,b) = (8,1,16)$:
\begin{center}
\begin{tabular}{l | l      l   l      l        l    l     l }  
$x\bullet_{8,1,16} z$ & 1 & 2 & 4 & 8 & 16  \\
\hline
1 & 1 & 2 & 4 & 8 & 8  \\
2 & 2 & 2 & 4 & 8 & 8 \\
4 & 4 & 4 & 4 & 8 & 8 \\
8 & 8 & 8 & 8 & 8 & 8 \\
16 & 16 & 16 & 16 & 16 & 16 
 \end{tabular}
\end{center}
When $(a,y,b)=(8,1,8)$, the extracted table is similar: just skip the last line
and the last column of the preceding table.
Note that we get the table of a ``maximum-law'', which is  ``dual''
to the corresponding table of the conjugate product $(1,8,1)$.
\end{example}

\begin{example}
When $(a,y,b)=(p,1,q)$ with
$p,q$  two prime numbers, then the structure of the products
$x \bullet_{p,1,q} z = (p \lor x) \land (q \lor z)$ 
follows this pattern:
\begin{center}
\begin{tabular}{l | l      l   l      l        l    l     l   l    l     l }  
$x\bullet_{5,1,3} z$ & 0 & 1 & 2 & 3 & 4 & 5   \\
\hline
0 & 15 & 3 & 3 & 3 & 3 &  15 \\
1 & 5 & 1 & 1 & 1 & 1 &  5 \\
2 & 5 & 1 & 1 & 1 & 1&    5 \\
3 & 15 & 3 & 3 & 3 & 3 &  15 \\
\end{tabular}
\end{center}
The extracted table for the divisors of $15$ is
\begin{center}
\begin{tabular}{l | l      l   l      l        l    l     l   l    l     l }  
$x\bullet_{5,1,3} z$ & 1 &  3 & 5 & 15  \\
\hline
1 & 1 & 1 & 5 & 5 \\
3 & 3 & 3 & 15 & 15 \\
5 & 1 & 1 & 5 & 5 \\
15 & 3 & 3 & 15 & 15\\
\end{tabular}
\end{center}
which agrees with the ``dual'' table of divisors of $15$ for
$(5,15,3)$. 
\end{example}

\begin{example}
Let $y=2$. First the table for the commutative product $\bullet_{3,2,3}$,
having line and column period $3$ and square period $6$.
The element $3$ is absorbing (constant line and column $3$).
\begin{center}
\begin{tabular}{l | l      l   l      l        l    l     l   l    l     l }  
$x\bullet_{3,2,3} z$ & 0 & 1 & 2 & 3 & 4 & 5 & 6 \\
\hline
0 & 6 & 3 & 6 & 3 & 6 & 3 & 6  \\
1 & 3 & 1 & 1 & 3 & 1 & 1 & 3 \\
2 & 6 & 1 & 2 & 3 & 2 & 1 & 6 \\
3 & 3 & 3 & 3 & 3 & 3 & 3 & 3 \\
4 & 6 & 1 & 2 & 3 & 2 & 1 & 6 \\
5 & 3 & 1 & 1 & 3 & 1 & 1 & 3 \\
6 & 6 & 3 & 6 & 3 & 6 & 3 & 6  
\end{tabular}
\end{center}
The table for the non-commutative product $\cdot_{3,2,4}$
having line period $4$ and column period $6$ and square period $12$
has been given in the introduction. 
As it turns out, there is an even shorter  period $3$ concerning columns.

Next, the table for the non-commutative product $\cdot_{3,2,5}$
having line period $10$ and column period $6$ and square period $30$:
\begin{center}
\begin{tabular}{l | l      l   l      l        l    l     l   l    l     l }  
$x\cdot_{3,2,5} z$ & 0 & 1 & 2 & 3 & 4 & 5 & 6  \\
\hline
0 & 30 & 5 & 10 & 15 & 10 & 5 & 30  \\
1 & 3 & 1 & 1 & 3 & 1 & 1 & 3 \\
2 & 6 & 1 & 2 & 3 & 2 & 1 & 6 \\
3 & 3 & 1 & 1 & 3 & 1 &  1 & 3 \\
4 & 6 & 1 & 2 & 3 & 2 & 1 & 6 \\
5 & 15 & 5 & 5 & 15 & 5 & 5 & 15 \\
6 & 6 & 1 & 2 & 3 & 2 & 1 & 6 \\
7 & 3 & 1 & 1 & 3 & 1 &  1 & 3 \\
8 & 6 & 1 & 2 & 3 & 2 & 1 & 6 \\
9 & 3 & 1 & 1 & 3 & 1 &  1 & 3 \\
10 & 30 & 5 & 10 & 15 & 10 & 5 & 30  
\end{tabular}
\end{center}
The same table gives rise to the multiplication table for the set of divisors of $30$:
\begin{center}
\begin{tabular}{l | l      l   l      l        l    l     l   l    l     l }  
$x\cdot_{3,2,5} z$ & 1 & 2 & 3 & 5 & 6 & 10  & 15 & 30   \\
\hline
1 & 1 & 1 & 3 & 1 & 3  & 1 & 3 & 3  \\
2 & 1 & 2 & 3 & 1 & 6 & 2 & 3 & 6 \\
3 & 1 & 1 & 3 & 1 & 3 & 1 & 3 & 3 \\
5 & 5 & 5 & 15 &  5 &15& 5 &15 &15\\
6 & 1 & 2 & 3 & 1 & 6 & 2 &3 & 6 \\
10&5 &10&15&5& 30 &10&15&30\\
15&5 & 5 &15&5&15&5 & 15 & 15 \\
30&5 &10&15& 5 &30&10 & 15&30\\
\end{tabular}
\end{center}
\end{example}


\begin{example} 
Here an example where $a,y,b$ are composed numbers having
common factors:

\begin{center}
\begin{tabular}{l | l      l   l      l        l    l     l   l    l     l }  
$x\cdot_{2,6,3} z$ & 0 & 1 & 2 & 3 & 4 & 5   \\
\hline
0 & 6 & 3 & 6 & 3& 6 & 3 
\\
1 & 2 & 1 & 2 & 1 & 2 & 1
\\
2 & 2 & 1 & 2 & 1 & 2 & 1
\\
3 & 6 & 3 & 6 & 3& 6 & 3 
\\
4 & 2 & 1 & 2 & 1 & 2 & 1
\\
5 & 2 & 1 & 2 & 1 & 2 & 1
\end{tabular}
\end{center}
and the extracted table for the divisors of 6 is:
\begin{center}
\begin{tabular}{l | l      l   l      l        l    l     l   l    l     l }  
$x\cdot_{2,6,3} z$ & 1 & 2 & 3 & 6    \\
\hline
1 & 1 & 2 & 1 & 2
\\
2 & 1 & 2 & 1 & 2
\\
3 & 3 & 6 & 3 & 6
\\
6 & 3 & 6 & 3 & 6
\end{tabular}
\end{center}
When $a,y,b$ have all three non-trivial factors in common, the structure
of the products becomes quite complicated, 
and certainly deserves to be studied further.
\end{example}

\begin{example}
Recall from Theorem \ref{th:bottom-top} that the 
non-principal parastrophes may have a ``bottom'', or not.
When the bottom is empty, then the tables are non-periodic. 
This is the case, for instance, for the product $(a,z) \mapsto L(8,a,4,2,z)$:
\begin{center}
\begin{tabular}{l | l      l   l      l        l    l     l   l    l     l }  
$a \backslash z$ 
    &0 & 1 & 2 & 3 & 4 & 5 & 6  & 7 & 8\\
\hline
0 & 0 & 2 & 2 & 6 & 4& 10& 6 &14 &8\\ 
1 & 4 & 2 & 2 & 2 & 4& 2 & 2 & 2 & 4 \\
2 & 4 & 2 & 2 & 2 & 4& 2 & 2 & 2 & 4 \\
3& 12& 2 & 2 & 6 & 4& 2 & 6 & 2 & 4 \\
4 & 4 & 2 & 2 & 2 & 4& 2 & 2 & 2 & 4 \\
5 &20 &2 & 2 & 2 & 4&10&2 & 2 & 4 \\
6 & 12 &2 &2 &4  & 6 &2 & 6 & 2 & 4 \\
7 & 28 &2 & 2&2 & 4 & 2 & 2&14& 4 \\
\end{tabular}
\end{center}
There is no easily visible pattern how to continue this table.
The parastrophic product
$(b,z) \mapsto L(8,2,4,b,z)$ is periodic and seems to
follow a pattern similar to a ``principal'' product: 
\begin{center}
\begin{tabular}{l | l      l   l      l        l    l     l   l    l     l }  
$b \backslash z$ 
    &0 & 1 & 2 & 3 & 4 & 5 & 6  & 7 & 8\\
\hline
0 & 8 & 8 & 8 & 8 & 8 & 8 & 8 & 8 & 8 \\
1 & 4 & 1 & 2 & 1 & 4 & 1 & 2 & 1 & 4 \\
2 & 4 & 2 & 2 & 2 & 4 & 2 & 2 & 2 & 4 \\
3 & 4 & 1 & 2 & 1 & 4 & 1 & 2 & 1 & 4 \\
4 & 4 & 4 & 4 & 4 & 4 & 4 & 4 & 4 & 4 \\
5 & 4 & 1 & 2 & 1 & 4 & 1 & 2 & 1 & 4 \\
6 & 4 & 2 & 2 & 2 & 4 & 2 & 2 & 2 & 4 \\
7 & 4 & 1 & 2 & 1 & 4 & 1 & 2 & 1 & 4 \\
8 & 8 & 8 & 8 & 8 & 8 & 8 & 8 & 8 & 8 \\
\end{tabular}
\end{center}
\end{example}

These constructions give a considerable number of finite semigroups, and hence seem to be a quite effective
machinery to ``produce'' semigroups.


\section{Afterthoughts}

In several respects, the arithmetic case is an ``antipode'' of the  geometric
approach developed in \cite{BeKi1, BeKi2, BeKi12}. Namely,
from a geometric point of view, the arithmetic case is a ``zero dimensional projective geometry'':  if $\K$ is a field, then $\Gras_\K(\K)$ would be the 
zero-dimensional projective space $\K\PP^0$, which is uninteresting.
If $\K$ is a ring, notably, for $\K=\Z$,  things change considerably:
\begin{enumerate}
\item 
{\em transversality} is a major tool used in \cite{BeKi1}; but
  the arithmetic case does not admit any transversal pairs (two non-trivial
submodules of $\Z$ always have non-trivial intersection)! 
\item
 {\em anti-automorphisms} are crucial ingredients to define ``classical
 geometries'' (see \cite{BeKi2}), but in the arithmetic case there are no
 antiautomorphisms, 
\item 
 of course, one can define abstractly a ``dual geometry'' (just by taking
 the dual lattice and exchanging $a$ and $b$); but at a first glance 
 there is no ``natural model'' how to ``realize this dual geometry in nature'',
\item
 {\em group actions with ``big'' (``open'') orbits} are heavily used in \cite{BeKi1};
 but in the arithmetic case,
the geometry is {\em highly non-homogeneous}: the  linear group $\GL(1,\Z) = \{ \pm 1 \}$ has trivial orbits in $\N$, and hence group-theoretic arguments
are of no use. 
\end{enumerate}
Thus, from a geometric viewpoint, 
the arithmetic case seems to be difficult to understand. 
As we have seen, 
this is partially compensated by the simple formula 
$L = \Gamma = U$; but a deeper understanding of the link between the
geometric and the arithmetic aspects would be desirable.



\begin{thebibliography}{BeNe05}



\bibitem[BeKi10a]{BeKi1}
W.~Bertram and M.~Kinyon,
Associative Geometries.
I: Torsors, Linear Relations and Grassmannians,
\textit{Journal of Lie Theory} 20 (2) (2010), 215-252;
arXiv : \url{https://arxiv.org/abs/0903.5441}

\bibitem[BeKi10b]{BeKi2}
W.~Bertram and M.~Kinyon,
Associative Geometries.
II: Involutions, the classical torsors, and their homotopes,
\textit{Journal of Lie Theory} 20 (2) (2010), 253-282;
\url{https://arxiv.org/abs/0909.4438}

\bibitem[BeKi12]{BeKi12}
W.~Bertram and M.~Kinyon,
Torsors and ternary Moufang loops arising in projective geometry. p  343 - 360 in: Algebra, Geometry and Mathematical Physics, Springer-Verlag 2014 (Proceedings of the AGMP, Mulhouse, France, October 2011)    \url{http://arxiv.org/abs/1206.2222}

\bibitem[Be12]{Be12}
W.\ Bertram,
The projective geometry of a group, 
 \url{http://arxiv.org/abs/1201.6201}


\bibitem[Be14]{Be14}
W.\ Bertram, 
Universal Associative Geometry.     \url{http://arxiv.org/abs/1406.1692}

\bibitem[Bi]{Bi}
Birkhoff, G., {\it Lattice Theory}, AMS Colloquium Publications XXV,  (Third edition), Rhode Island 1973


\bibitem[BS]{BS}
Burris, S., and
H. P. Sankappanavar,
{\it A Course in
Universal Algebra}, Springer 1981

\bibitem[CS03]{CS}
Conway, J.H., and D.A.\ Smith,
{\it On quaternions and octonions: their geometry, arithmetic, and symmetry},
A.K.\ Peters Ltd, Natick MA, 2003.


\bibitem[S38]{Stone}
Stone, M.,
``Topological representations of distributive lattices and Brouwerian logics'',
Czasopis pro postování matematiky a fysiky, \href{https://dml.cz/bitstream/handle/10338.dmlcz/124080/CasPestMatFys_067-1938-1_1.pdf}{Vol. 67 (1938), No. 1, 1--25}
\end{thebibliography}
\end{document}